\newtheorem{theorem}{Theorem}[section]
\newtheorem{lemma}[theorem]{Lemma}
\newtheorem{corollary}[theorem]{Corollary}
\newtheorem{proposition}[theorem]{Proposition}
\newtheorem*{conjecture}{Conjecture}
\theoremstyle{definition}
\newtheorem{definition}[theorem]{Definition}
\theoremstyle{remark}
\newtheorem{remark}[theorem]{Remark}
\numberwithin{equation}{section}
\begin{document}

\allowdisplaybreaks

\title[Properties of reproducing kernel Hilbert spaces]{Properties of reproducing kernel Hilbert spaces of a group action}

\author[T. Blom]{Tyler Blom}
\address{Department of Mathematics, St. Norbert College, De Pere, Wisconsin}
\email{tyler.blom@snc.edu}

\author[S.A. Hokamp]{Samuel A. Hokamp}
\address{Department of Computer Studies and Mathematics, University of Dubuque, Dubuque, Iowa}
\email{shokamp@dbq.edu}

\author[A. Jimenez]{Alejandro Jimenez}
\address{Department of Mathematics and Statistics, University of Maine, Orono, Maine}
\email{alejandro.jimenez@maine.edu}

\author[J. Laubacher]{Jacob Laubacher}
\address{Department of Mathematics, St. Norbert College, De Pere, Wisconsin}
\email{jacob.laubacher@snc.edu}
%\address{Department of Mathematics and Computer Science, Hillsdale College, Hillsdale, Michigan 49242}
%\email{jlaubacher@hillsdale.edu}

\subjclass[2020]{Primary 22F30; Secondary 46E22, 46E20.}

\date{\today}

\keywords{Group actions, reproducing kernels, spaces of continuous functions.\\\indent\emph{Corresponding author.} Samuel A. Hokamp \Letter~\href{mailto:shokamp@dbq.edu}{shokamp@dbq.edu} \phone~563-589-3231.}

\thanks{The first and third authors received funding from \emph{The Collaborative} at St. Norbert College}

\begin{abstract}
In this paper, we investigate properties of a reproducing kernel Hilbert space of a group action. In particular, we introduce an equivalence relation on a compact Hausdorff space $X$, and consequently establish three equivalent definitions for when two elements are related. We also see how the equivalence classes of $X$ correspond to subgroups of the group acting transitively on $X$, which we aptly refer to as relation stabilizers.
\end{abstract}

\maketitle

%%%%%%%%%%%%%%%%%%%%%%%%%%%%%%%%%%%%%%%%%%%%%%%%%%%%%%%%%%%%%
\section{Introduction}
%%%%%%%%%%%%%%%%%%%%%%%%%%%%%%%%%%%%%%%%%%%%%%%%%%%%%%%%%%%%%

In their 1976 paper \cite{NR}, Nagel and Rudin establish that every closed unitarily-invariant space of continuous functions defined on the unit sphere of $\mathbb{C}^n$ is the closed direct sum of a certain collection of unitarily-invariant spaces. This collection is a unique subcollection of a larger, fixed collection of closed unitarily-invariant spaces that are all pairwise orthogonal (in the sense of $L^2$) and \textit{minimal} in the sense that none of them contains a proper closed unitarily-invariant subspace. In essence, Nagel and Rudin show that every closed unitarily-invariant space is constructed from the same collection of simple building blocks. Nagel and Rudin extend this result to closed unitarily-invariant spaces of $L^p$-functions on the unit sphere (for $1\leq p<\infty$), and in \cite{H}, the second author devises a suitable analogue for $L^\infty$-functions.

The techniques used in \cite{H} and \cite{NR} are largely independent of the particular environment of unitarily-invariant spaces of functions defined on the unit sphere of $\mathbb{C}^n$, and the authors have conjectured that a more general result for spaces of continuous functions that are invariant under a group action must exist. This conjecture is stated below:

\begin{conjecture}
Let $G$ be a compact group acting continuously and transitively on a compact Hausdorff space $X$. Then there exists a collection $\mathscr{G}$ of closed, pairwise orthogonal, and minimal spaces of continuous functions on $X$, each invariant under the group action, such that any closed space of continuous functions on $X$ that is invariant under the group action is the closed direct sum of a unique subcollection of $\mathscr{G}$.
\end{conjecture}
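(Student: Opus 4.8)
The plan is to recast the abstract action in the concrete language of the Peter--Weyl theorem. Fix a base point $x_0 \in X$ and let $K = \{g \in G : g\cdot x_0 = x_0\}$ be its stabilizer; since $G$ acts transitively, the orbit map induces a homeomorphism $X \cong G/K$, and compactness of $G$ furnishes a unique $G$-invariant probability measure on $X$, with respect to which the translation action is unitary and the inclusion $C(X) \hookrightarrow L^2(X)$ is continuous with dense image. I would take as the candidate collection $\mathscr{G} = \{E_\pi\}$, where $\pi$ ranges over the irreducible unitary representations of $G$ admitting a nonzero $K$-fixed vector and $E_\pi \subseteq C(X)$ is the corresponding isotypic component, spanned by the matrix coefficients $gK \mapsto \langle \pi(g) v, w\rangle$ with $w$ a $K$-fixed vector. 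Each $E_\pi$ is finite-dimensional and consists of genuinely continuous functions, so on each block the distinction between the sup-norm and $L^2$ topologies is harmless.

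Next I would verify that $\mathscr{G}$ has the four required features. Finite-dimensionality and $G$-invariance are immediate from the construction; pairwise $L^2$-orthogonality is exactly Schur orthogonality of matrix coefficients of inequivalent irreducibles; and density of $\bigoplus_\pi E_\pi$ in $C(X)$ follows from Stone--Weierstrass, since the span of matrix coefficients is a self-adjoint subalgebra of $C(G)$ (products of coefficients are coefficients of tensor products, conjugates are coefficients of contragredients) that separates points, and its $K$-invariant part is precisely $\bigoplus_\pi E_\pi \subseteq C(X) = C(G/K)$. Minimality is the one feature that is genuinely conditional: each $E_\pi$ decomposes as $\pi^{\oplus m_\pi}$ with $m_\pi = \dim \pi^K$, so $E_\pi$ is itself a minimal (irreducible) invariant space exactly when $m_\pi \le 1$ for every $\pi$.

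For the decomposition itself, given a closed $G$-invariant $V \subseteq C(X)$, I would use the isotypic projections obtained by convolving against $\dim(\pi)\,\overline{\chi_\pi}$, each of which is a vector-valued average of translates and hence maps the closed invariant subspace $V$ into itself. In the multiplicity-free situation each $E_\pi$ is irreducible, so $V \cap E_\pi$ is either $0$ or all of $E_\pi$; setting $S = \{\pi : E_\pi \subseteq V\}$, one checks that the $\pi$-projection of every $f \in V$ vanishes for $\pi \notin S$. It then remains to show $V = \overline{\bigoplus_{\pi \in S} E_\pi}$ in the sup norm, for which I would invoke a Fej\'er-type summability kernel on $G$: the associated central means $\sigma_N(f)$ lie in $\bigoplus_{\pi \in S} E_\pi$ (their only isotypic components are those already present in $f$) and converge to $f$ uniformly, giving $V \subseteq \overline{\bigoplus_{\pi\in S}E_\pi}$. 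Uniqueness is then automatic, since $S = \{\pi : E_\pi \cap V \neq 0\}$ is intrinsically determined by $V$.

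I expect two obstacles, one structural and one analytic. The structural --- and, I believe, decisive --- difficulty is multiplicity: when $(G,K)$ fails to be a Gelfand pair, some $m_\pi \ge 2$, the genuinely minimal invariant spaces are then proper irreducible subspaces of $E_\pi$, of which there is a continuum, and an invariant subspace may meet $E_\pi$ in a ``diagonal'' copy that is not a sum of whole blocks; both minimality and uniqueness fail simultaneously, so the conjecture as stated should be expected to hold precisely for multiplicity-free (Gelfand-pair) actions, and I would first prove it in that generality. The analytic obstacle is the passage from $L^2$- to sup-norm convergence: the isotypic projections are not uniformly bounded on $C(X)$, so one cannot simply sum the $L^2$-expansion, and the argument hinges on producing a summability kernel whose central means converge uniformly on all of $C(X)$ --- the step where compact-group harmonic analysis has to do real work. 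I suspect the paper's equivalence relation and its relation stabilizers are designed precisely to detect the Gelfand/multiplicity-free condition and thereby single out the regime in which $\mathscr{G}$ is well defined.
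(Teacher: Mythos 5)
First, a point of order: the statement you were asked to prove is the paper's \emph{Conjecture}, and the paper does not prove it. The paper says only that existence of a collection $\mathscr{G}$ with the listed properties follows from the Peter--Weyl theorem \cite{PW}, that the decomposition property has been verified only ``under a further restriction on the collection $\mathscr{G}$'' in \cite{H2}, and that the conjecture ``in its full generality is yet to be proven''; the body of the paper (the relation $\sim$, the relation stabilizers $\mathcal{E}(x)$) is machinery aimed at a future proof, not a proof. So there is no proof in the paper to compare yours against, and your proposal must be judged on its own terms: does it establish the conjecture? It does not, and you say so yourself --- what you give is the correct identification of the canonical candidate $\mathscr{G}$ (the isotypic blocks of $C(G/K)$), a workable outline in the multiplicity-free case, and an explicit concession that the case of multiplicity $m_\pi\ge 2$ is untreated.

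Second, your structural ``obstacle'' is sharper than you present it: it is not merely a case your method misses, it is a disproof of the conjecture as literally stated, so no blind proof attempt could have succeeded. Take $X=G$ to be any nonabelian compact group acting on itself by left translation; this action is continuous and transitive and $X$ is compact Hausdorff. Every minimal closed invariant subspace lies inside a single isotypic block $E_\pi$ (the isotypic projection is an average of translates, hence carries each closed invariant subspace into itself), the block of the left regular representation attached to an irreducible $\pi$ has multiplicity $d_\pi=\dim\pi$, and by Schur's lemma its invariant subspaces are exactly $U\otimes H_\pi$ for subspaces $U$ of the $d_\pi$-dimensional multiplicity space, the minimal ones being $L\otimes H_\pi$ with $L$ a line. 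Now suppose $\mathscr{G}$ were as in the conjecture and let $M$ be any minimal closed invariant subspace: writing $M$ as a closed direct sum of a subcollection of $\mathscr{G}$ forces, by minimality, that subcollection to be $\{M\}$, so $\mathscr{G}$ must contain \emph{every} minimal invariant subspace. But once $d_\pi\ge 2$ the minimal subspaces $L\otimes H_\pi$ of a single block form a continuum, and Schur orthogonality gives $L\otimes H_\pi\perp L'\otimes H_\pi$ if and only if $L\perp L'$, so they cannot be pairwise orthogonal --- a contradiction. Hence the conjecture can only hold after restricting the hypotheses, and your guess that the right dividing line is the multiplicity-free (Gelfand pair) condition is exactly correct; it covers the Nagel--Rudin sphere $\bigl(U(n),U(n-1)\bigr)$ of \cite{NR} and is, in effect, the ``further restriction'' of \cite{H2}. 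Even in that regime, note that your sketch leaves the genuine analytic step --- sup-norm approximation of $f$ in a closed invariant subspace by trigonometric polynomials lying in that same subspace, via a central summability kernel --- asserted rather than proved; that is precisely where the published partial results do their work.
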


Existence of a collection $\mathscr{G}$ with the desired properties is a result of the Peter-Weyl Theorem from \cite{PW}. Thus, verification of the conjecture only requires showing that every closed space of continuous functions invariant under the group action can be written as the closed direct sum of a unique subcollection of $\mathscr{G}$. In \cite{H2}, the second author verifies the conjecture under a further restriction on the collection $\mathscr{G}$, and this result is extended to spaces of $L^\infty$-functions in \cite{H1}. The conjecture in its full generality is yet to be proven, but this paper takes strides toward establishing the conjecture in full.

A key result from \cite{H2} (Theorem 3.5, stated in Section \ref{prelims} as Theorem \ref{oldThm}) establishes that each element $H$ of $\mathscr{G}$ must be a reproducing kernel Hilbert space, and thus the focus of our paper turns to the study of what the authors have come to call \textit{reproducing kernel Hilbert spaces of a group action}. In particular, this paper establishes results that follow from the relationship between the kernel functions $K_x$ from $H$ and the group action itself. Section~\ref{main} develops these results, prominent among which are Lemma \ref{eqrel} and Theorem \ref{tfaemain}, which establish an equivalence relation on $X$ and consequent properties. This relation ties to a class of subgroups of $G$, which are studied in Section \ref{more}. It is the authors' belief that the material of these sections is key to the full verification of the above conjecture.

%%%%%%%%%%%%%%%%%%%%%%%%%%%%%%%%%%%%%%%%%%%%%%%%%%%%%%%%%%%%%
\section{Preliminaries}\label{prelims}
%%%%%%%%%%%%%%%%%%%%%%%%%%%%%%%%%%%%%%%%%%%%%%%%%%%%%%%%%%%%%

Throughout this paper, we will assume that all scalars are taken from $\mathbb{C}$ unless otherwise stated. Next, we fix $X$ to be a compact Hausdorff space, and $C(X)$ will denote the space of continuous complex-valued functions with domain $X$. Furthermore, we let $G$ be a compact group which acts continuously and transitively on $X$. Concerning the group action, we use the conventional shorthand of $\alpha x$ to signify the action of $\alpha$ in $G$ on the element $x$ from $X$. However, there are times when it is advantageous to be more explicit with the group action, in which case we can use the notation of the map $\varphi_\alpha:X\longrightarrow X$. Here, for any $x\in X$, we have that $\varphi_\alpha(x)=\alpha\cdot x=\alpha x$. We aim to keep this paper as self-contained as possible, but for more reading on group theory, we direct the interested reader to a foundational reference like \cite{DF}.

A space of complex functions defined on $X$ is \textit{$G$-invariant} if $f\circ\varphi_\alpha$ remains within the space for every $f$ in the space and every $\alpha\in G$. This definition appears in \cite{H2}, but it is only a generalization of particular cases (such as that of unitarily-invariant spaces from \cite{NR}). Naturally, $C(X)$ is our first example of a $G$-invariant space.

We let $\mu$ denote the unique regular Borel probability measure on $X$ that is invariant under the group action. Specifically, we have
\begin{equation}\label{intinv}
\int_Xf~d\mu=\int_Xf\circ\varphi_\alpha~d\mu
\end{equation}
for all $f\in C(X)$ and all $\alpha\in G$. Existence of such a measure is a result of Weil from \cite{W}. We then define an inner product $[\cdot,\cdot]$ on $C(X)$ in the natural way:
\begin{equation}\label{intinn}
[f,g]=\int_Xf\overline{g}~d\mu.
\end{equation}
As an easy consequence of \eqref{intinv} and \eqref{intinn}, one has the following expected results:

\begin{lemma}
For all $f,g\in C(X)$ and all $\alpha\in G$, we have that
\begin{enumerate}[(1)]
    \item $[f\circ\varphi_\alpha,g\circ\varphi_\alpha]=[f,g]$, and
    \item $[f\circ\varphi_\alpha,g]=[f,g\circ\varphi_{\alpha^{-1}}]$.
\end{enumerate}
\end{lemma}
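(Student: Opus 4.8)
The plan is to reduce both identities to the integral invariance \eqref{intinv} by exploiting the fact that composition with $\varphi_\alpha$ is compatible with pointwise multiplication and conjugation. The only facts I will need beyond the definitions are that $\varphi_\alpha$ is a homeomorphism of $X$ whose inverse is $\varphi_{\alpha^{-1}}$ (so that $\varphi_{\alpha^{-1}}\circ\varphi_\alpha=\id$), that the product of two members of $C(X)$ again lies in $C(X)$, and the invariance \eqref{intinv} itself.

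For part (1), first I would observe the pointwise identity $(f\circ\varphi_\alpha)\,\overline{(g\circ\varphi_\alpha)}=(f\overline{g})\circ\varphi_\alpha$, which holds because evaluating either side at $x$ yields $f(\alpha x)\overline{g(\alpha x)}$. Since $f\overline{g}\in C(X)$, I can then apply \eqref{intinv} to the continuous function $f\overline{g}$ to obtain
\[
[f\circ\varphi_\alpha,g\circ\varphi_\alpha]=\int_X (f\overline{g})\circ\varphi_\alpha\,d\mu=\int_X f\overline{g}\,d\mu=[f,g],
\]
using the definition \eqref{intinn} of the inner product at the first and last equalities.

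For part (2), the cleanest route is to feed part (1) back in with $g$ replaced by $g\circ\varphi_{\alpha^{-1}}$. This gives $[f\circ\varphi_\alpha,(g\circ\varphi_{\alpha^{-1}})\circ\varphi_\alpha]=[f,g\circ\varphi_{\alpha^{-1}}]$, and the inner argument collapses because $(g\circ\varphi_{\alpha^{-1}})\circ\varphi_\alpha=g\circ(\varphi_{\alpha^{-1}}\circ\varphi_\alpha)=g\circ\id=g$, which is exactly where the group-action law $\alpha^{-1}(\alpha x)=x$ enters. Alternatively, one can argue directly by applying \eqref{intinv} with group element $\alpha$ to the continuous function $f\,(\overline{g}\circ\varphi_{\alpha^{-1}})$ and simplifying $(\overline{g}\circ\varphi_{\alpha^{-1}})\circ\varphi_\alpha=\overline{g}$ in the same fashion. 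Either way the claim drops out.

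I do not anticipate a genuine obstacle here: the statement is essentially a bookkeeping consequence of \eqref{intinv}. The only points that require care are, first, verifying that the relevant products lie in $C(X)$ so that \eqref{intinv} is actually applicable, since the invariance was asserted only for continuous functions, and second, correctly using the identity $\varphi_{\alpha^{-1}}\circ\varphi_\alpha=\id$ rather than $\varphi_{\alpha}\circ\varphi_{\alpha^{-1}}$ — a harmless distinction here, but worth stating explicitly so that the appearance of $\alpha^{-1}$ in part (2) is justified.
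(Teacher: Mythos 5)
Your proof is correct and follows exactly the route the paper intends: the paper states this lemma without proof, asserting it as ``an easy consequence of \eqref{intinv} and \eqref{intinn},'' and your argument---applying the invariance \eqref{intinv} to the continuous function $f\overline{g}$ for part (1), then deducing part (2) by substituting $g\circ\varphi_{\alpha^{-1}}$ into part (1) and collapsing $\varphi_{\alpha^{-1}}\circ\varphi_\alpha=\id$---is precisely the omitted bookkeeping. Your care in checking that $f\overline{g}\in C(X)$ so that \eqref{intinv} applies is a welcome detail the paper leaves implicit.
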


A space $H$ of complex functions defined on $X$ is a \textit{reproducing kernel Hilbert space} (RKHS) provided that $H$ is a Hilbert space, and for each $x\in X$, the evaluation functional $E_x(f)=f(x)$, for all $f\in H$, is bounded. This is equivalent to requiring that there be a unique function $K_x\in H$ for each $x\in X$ such that $f(x)$ is recovered when the inner product of $f$ and $K_x$ is taken, for all $f\in H$. For more reading on general Hilbert space theory, we refer the reader to \cite{R}. Moreover, one can see \cite{A} or \cite{PR} for additional information on reproducing kernel Hilbert spaces.

A concept which appeared in \cite{H2} but was not formally defined at the time is that of a \textit{reproducing kernel Hilbert space of a group action}, by which we mean a RKHS of continuous functions defined on $X$, whose inner product is given by \eqref{intinn}. Theorem 3.5 of \cite{H2} (stated below), which is foundational to the results established in Sections \ref{main} and \ref{more}, is actually then a result pertaining to reproducing kernel Hilbert spaces of a group action. We have adjusted the statement of the theorem below to reflect better its relationship to this concept.

\begin{theorem}[Theorem 3.5 from \cite{H2}]\label{oldThm}
Let $H$ be a nontrivial, closed $G$-invariant subspace of $C(X)$. Further, if $H$ is closed in $L^2(\mu)$, then $H$ is a reproducing kernel Hilbert space. In particular, to each $x\in X$ corresponds a unique $K_x\in H$ such that
$$
f(x)=[f,K_x]
$$
for all $f\in H$. Additionally, the functions $K_x$ satisfy the following for all $x,y\in X$:
\begin{enumerate}[(1)]
    \item $K_x(y)=\overline{K_y(x)}$,
    \item $\displaystyle f=\int_Xf(x)K_x~d\mu(x)$ for all $f\in H$,
    \item\label{oldact} $K_{\varphi_\alpha(x)}=K_x\circ\varphi_{\alpha^{-1}}$,
    \item $K_x=K_x\circ\varphi_\alpha$ for all $\alpha\in G$ such that $\alpha x=x$, and
    \item\label{cpart} $K_x(x)=K_y(y)>0$.
\end{enumerate}
\end{theorem}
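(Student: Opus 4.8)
The plan is to first establish the reproducing kernel structure and then derive the five listed identities as consequences of it. To show that each evaluation functional $E_x$ is bounded on $H$, I would equip $H$ with the $L^2(\mu)$ norm, under which it is complete since it is closed in $L^2(\mu)$, and consider the inclusion map $T\colon H\to C(X)$, $Tf=f$, where $C(X)$ carries the supremum norm. The key observation is that $\mu$ has full support: its support is a nonempty closed set that is invariant under the action, so by transitivity it must be all of $X$. Consequently two continuous functions agreeing $\mu$-a.e.\ agree everywhere, and the closed graph theorem applies to $T$, because uniform convergence forces $L^2$ convergence on a probability space. This yields a constant $C$ with $\abs{f(x)}\le\|f\|_\infty\le C\|f\|_{L^2}$ for all $x$, so each $E_x$ is bounded, and the Riesz representation theorem produces the unique $K_x\in H$ with $f(x)=[f,K_x]$.

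Identity (1) is then immediate: writing $K_x(y)=E_y(K_x)=[K_x,K_y]$ and using Hermitian symmetry of the inner product gives $K_x(y)=\overline{[K_y,K_x]}=\overline{K_y(x)}$. For (2), I would interpret $\int_X f(x)K_x\,d\mu(x)$ as a vector-valued integral in $H$ and test it against an arbitrary $g\in H$: moving the inner product inside the integral and invoking the reproducing property gives $[g,\int_X f(x)K_x\,d\mu(x)]=\int_X\overline{f(x)}g(x)\,d\mu=[g,f]$, whence the two sides coincide. The technical care here lies in justifying that $x\mapsto f(x)K_x$ is integrable; this follows because $\|K_x\|=\|E_x\|\le C$ is uniformly bounded, by the first paragraph, and $f$ is bounded on the compact space $X$.

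The remaining identities exploit the interaction of the kernel with the action. For (3), I would verify that $K_x\circ\varphi_{\alpha^{-1}}$ reproduces evaluation at $\varphi_\alpha(x)$: using part (2) of the preceding lemma, $[f,K_x\circ\varphi_{\alpha^{-1}}]=[f\circ\varphi_\alpha,K_x]=(f\circ\varphi_\alpha)(x)=f(\varphi_\alpha(x))$ for every $f\in H$. Since $H$ is $G$-invariant, $K_x\circ\varphi_{\alpha^{-1}}\in H$, and uniqueness of the reproducing kernel forces $K_{\varphi_\alpha(x)}=K_x\circ\varphi_{\alpha^{-1}}$. Identity (4) is then the special case of (3) in which $\alpha$ fixes $x$, applied to $\alpha^{-1}$, which also fixes $x$. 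For the equality in (5), transitivity supplies $\alpha$ with $\varphi_\alpha(x)=y$, and then (3) yields $K_y(y)=(K_x\circ\varphi_{\alpha^{-1}})(y)=K_x(x)$; positivity follows since $K_x(x)=[K_x,K_x]=\|K_x\|^2\ge 0$, and were this zero we would have $f(x)=0$ for all $f\in H$, which by $G$-invariance and transitivity forces $H=\{0\}$, contradicting nontriviality.

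I expect the main obstacle to be the first paragraph, namely confirming that $H$ is genuinely a reproducing kernel Hilbert space: boundedness of the evaluation functionals is not automatic for an abstract closed subspace of $L^2(\mu)$ and hinges on the full-support property of $\mu$ together with the closed graph theorem. A secondary technical point is making the vector-valued integral in (2) rigorous, though this becomes routine once the uniform bound on $\|K_x\|$ is in hand.
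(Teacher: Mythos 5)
The paper itself contains no proof of this statement: it is imported verbatim as Theorem 3.5 of \cite{H2} and used as background, so there is no internal argument to compare yours against. Judged on its own, your proof is sound and essentially complete, and it supplies exactly the two ideas the result hinges on: first, that $\mu$ has full support (its support is a nonempty, closed, $G$-invariant set, hence all of $X$ by transitivity), so that continuous functions are determined by their $\mu$-equivalence classes and evaluation is well defined on $H\subseteq L^2(\mu)$; second, that the inclusion $(H,\|\cdot\|_{L^2})\to (C(X),\|\cdot\|_\infty)$ has closed graph between Banach spaces, yielding $|f(x)|\le\|f\|_\infty\le C\|f\|_{L^2}$ and hence bounded evaluation functionals, after which Riesz representation produces the kernels $K_x$. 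Items (1) and (3)--(5) then follow by the manipulations you give; in particular, for (3) you correctly combine the preliminary identity $[f\circ\varphi_\alpha,g]=[f,g\circ\varphi_{\alpha^{-1}}]$ with $G$-invariance of $H$ and uniqueness of the representer, and your positivity argument in (5) correctly uses nontriviality plus transitivity. Two points deserve tightening. First, invariance of the support of $\mu$ needs $\mu(\varphi_\alpha^{-1}(E))=\mu(E)$ for Borel sets $E$, not merely the integral identity for continuous $f$; this follows by applying uniqueness in the Riesz representation theorem to the pushforward measure $(\varphi_\alpha)_*\mu$, and is worth saying explicitly. Second, statement (2) is best read as a weak (Pettis) integral: the identity $\bigl[g,\int_X f(x)K_x\,d\mu(x)\bigr]=\int_X\overline{f(x)}g(x)\,d\mu=[g,f]$ for every $g\in H$, together with Riesz representation of the bounded functional on the right, is the whole content, and this route avoids any appeal to Bochner integrability or measurability of $x\mapsto K_x$ --- a genuine issue, since $H$ need not be separable when $X$ is not metrizable, so your uniform bound $\|K_x\|_2\le C$ alone would not justify a strong integral.
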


The particular conclusion of Theorem \ref{oldThm} is that a closed $G$-invariant space of continuous functions which is also closed under the inner product from \eqref{intinn} is a reproducing kernel Hilbert space of a group action. We also note that the aforementioned functions $K_x\in H$ that correspond uniquely to each $x\in X$ are referred to as \emph{kernels}. The main impetus of this paper is to uncover useful properties about these kernels (see Section \ref{main}).

Moreover, it is important to understand the gravity of Theorem \ref{oldThm}, and \eqref{cpart} in particular. Notice that since $K_x(x)=K_y(y)$ for all $x,y\in X$, we then have that $K_x(x)$ is a constant. We shall denote this as $c$. Furthermore, since $c=K_x(x)>0$, we can conclude that $c$ is a positive real number. This was shown in \cite{H2}, and from this we gather that none of the kernels $K_x$ are trivial, for if any of them were, we would have that $H$ would be trivial as well.

Finally, to round out this preliminary section, we recall the Cauchy-Schwarz Inequality, mainly to call attention to the addendum.

\begin{theorem}[Cauchy-Schwarz Inequality]
Suppose a vector space has an inner product $[\cdot,\cdot]$. For all vectors $x$ and $y$, we have that
$$
|[x,y]|\leq||x||_2\cdot||y||_2.
$$
This inequality is an equality if and only if $x=\lambda y$ for some scalar $\lambda$. 
\end{theorem}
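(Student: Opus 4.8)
The plan is to run the classical ``optimal scalar'' argument, deriving everything from the positivity axiom $[z,z]\ge 0$ of the inner product. First I would dispose of the degenerate case $y=0$: here $[x,y]=0$ and $\|y\|_2=0$, so both sides of the inequality vanish and it holds with equality. With $y=0$ set aside, I assume $y\neq 0$, so that $\|y\|_2^2=[y,y]>0$ by positive-definiteness, and in particular division by $[y,y]$ is legitimate.

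For the inequality itself, the key idea is to feed the vector $z=x-\lambda y$ into the positivity axiom for a scalar $\lambda$ chosen at the end. Expanding with the sesquilinearity of $[\cdot,\cdot]$ and the conjugate symmetry $[y,x]=\overline{[x,y]}$ (a property of any complex inner product) gives
$$
0\le [x-\lambda y,\,x-\lambda y]=\|x\|_2^2-\overline{\lambda}\,[x,y]-\lambda\,[y,x]+\abs{\lambda}^2\,\|y\|_2^2.
$$
The decisive step is the choice $\lambda=[x,y]/[y,y]$, the coefficient of the orthogonal projection of $x$ onto $y$; substituting it and simplifying (using $[y,x][x,y]=\abs{[x,y]}^2$) collapses the right-hand side to $\|x\|_2^2-\abs{[x,y]}^2/\|y\|_2^2$. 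Multiplying through by $\|y\|_2^2>0$ yields $\abs{[x,y]}^2\le\|x\|_2^2\,\|y\|_2^2$, and taking square roots gives the stated inequality.

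For the equality addendum I would trace the lone inequality back to its origin. The only inequality used is $[z,z]\ge 0$ with $z=x-\lambda y$, so equality in Cauchy--Schwarz forces $[x-\lambda y,\,x-\lambda y]=0$; positive-definiteness then gives $x-\lambda y=0$, exhibiting $x=\lambda y$ with the projection coefficient above. Conversely, if $x=\lambda y$ for some scalar $\lambda$, a direct computation gives $\abs{[x,y]}=\abs{\lambda}\,\|y\|_2^2=\|x\|_2\,\|y\|_2$, so equality holds.

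I do not expect a genuine obstacle, as this is a textbook result; the only points requiring care are the bookkeeping of complex conjugates in the sesquilinear expansion and an honest treatment of the boundary case $y=0$, where the characterization ``$x=\lambda y$'' should be read as linear dependence of $x$ and $y$ so as to remain correct when $y=0$ but $x\neq 0$.
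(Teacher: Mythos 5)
Your proof is correct, and it is the standard ``optimal scalar'' argument: apply positivity to $x-\lambda y$ with $\lambda=[x,y]/[y,y]$, then trace equality back through the single inequality used. There is, however, nothing in the paper to compare it against: the authors state the Cauchy--Schwarz Inequality in the preliminaries as a recalled classical fact, with no proof, ``mainly to call attention to the addendum,'' which they later invoke in the proof of Theorem \ref{tfaemain} ($(d)\Rightarrow(a)$). So your write-up simply supplies a proof the paper deliberately omits, and it does so correctly. One point you raise deserves emphasis: as you note, the addendum as printed is imprecise in the degenerate case, since if $y=0$ and $x\neq 0$ then both sides vanish (so equality holds) yet $x$ is not a scalar multiple of $y$; the correct formulation of the equality case is linear dependence of $x$ and $y$. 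This is a genuine, if minor, correction to the statement. It is also harmless for the paper's purposes: in the only place the addendum is used, the two vectors are kernels $K_x$ and $K_y$, which are nonzero because $\|K_x\|_2^2=c>0$ by Lemma \ref{lemmafirst}, so the degenerate case never occurs there.
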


%%%%%%%%%%%%%%%%%%%%%%%%%%%%%%%%%%%%%%%%%%%%%%%%%%%%%%%%%%%%%
\section{Properties of the Kernels}\label{main}
%%%%%%%%%%%%%%%%%%%%%%%%%%%%%%%%%%%%%%%%%%%%%%%%%%%%%%%%%%%%%

Throughout the paper, we let $H$ be a nontrivial reproducing kernel Hilbert space of a group action with kernels $K_x$, and we define $c=K_x(x)$ for any $x\in X$. As noted in Section~\ref{prelims}, we know $c$ is a positive real number. The goal of this section is to establish natural and convenient properties for kernels. We start by looking at how $c$ behaves with respect to these kernels $K_x$.

\begin{lemma}\label{lemmafirst}
Let $x,y\in X$. We have that
\begin{enumerate}[(i)]
    \item\label{lem1norm} $c=||K_x||_2^2$, and
    \item\label{lem1c} $|K_x(y)|\leq c$.
\end{enumerate}
\end{lemma}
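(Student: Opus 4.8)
The plan is to derive both parts directly from the reproducing property $f(x)=[f,K_x]$ together with the facts already collected in Theorem \ref{oldThm}.

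For part \eqref{lem1norm}, I would apply the reproducing property to the function $f=K_x$ itself. Since $K_x\in H$, evaluating at the point $x$ gives $K_x(x)=[K_x,K_x]=\|K_x\|_2^2$. By the definition $c=K_x(x)$, this immediately yields $c=\|K_x\|_2^2$. The only thing to note is that $\|K_x\|_2^2$ is indeed the squared norm induced by the inner product \eqref{intinn}, which is exactly what the subscript $2$ denotes, so there is nothing further to check.

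For part \eqref{lem1c}, the natural approach is the Cauchy-Schwarz Inequality. Again using the reproducing property, I would write $K_x(y)=[K_x,K_y]$, taking $f=K_x$ and evaluating at $y$. Applying Cauchy-Schwarz gives
\[
|K_x(y)|=\bigl|[K_x,K_y]\bigr|\leq \|K_x\|_2\cdot\|K_y\|_2.
\]
By part \eqref{lem1norm}, both $\|K_x\|_2^2$ and $\|K_y\|_2^2$ equal $c$, so $\|K_x\|_2\cdot\|K_y\|_2=\sqrt{c}\cdot\sqrt{c}=c$, giving $|K_x(y)|\leq c$ as desired.

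I do not expect any serious obstacle here; the main point is simply choosing the right test functions in the reproducing identity (namely $f=K_x$ in both cases, evaluated at $x$ and at $y$ respectively) and recalling that every kernel has the same norm $\sqrt{c}$. The one subtlety worth stating explicitly is that part \eqref{lem1c} relies on part \eqref{lem1norm}, so the two should be proved in order.
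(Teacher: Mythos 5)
Your proposal is correct and matches the paper's own argument exactly: part \eqref{lem1norm} follows from the reproducing identity applied to $f=K_x$ at the point $x$, and part \eqref{lem1c} follows from Cauchy--Schwarz applied to $K_x(y)=[K_x,K_y]$ together with the fact that every kernel has norm $\sqrt{c}$. No gaps to report.
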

\begin{proof}
Let $x,y\in X$. First we observe that
$$
c = K_x(x) = [K_x, K_x] = \sqrt{[K_x, K_x]}^2=||K_x||_2^2.
$$
Hence, the first statement is proved.

Next, using $\sqrt{c}=||K_x||_2$ for any $x\in X$ by way of $(i)$ above, we see that
$$
|K_x(y)|=|[K_x,K_y]|\le ||K_x||_2\cdot||K_y||_2=\sqrt{c} \cdot \sqrt{c}=c.
$$
Hence, the second statement is proved.
\end{proof}

We recall that two vectors $x$ and $y$ are orthogonal if their inner product is $0$. In notation, we write $x\perp y$.

\begin{lemma}\label{lemperp}
Let $f\in H$ and $x\in X$. We have that
\begin{enumerate}[(i)]
    \item $f\perp K_x$ if and only if $f(x)=0$, and
    \item $\Big(f-\frac{f(x)}{c}K_x\Big)\perp K_x$.
\end{enumerate}
\end{lemma}
\begin{proof}
Let $f\in H$ and $x\in X$. For the first statement, we start by supposing that $f \perp K_x$. Since $f \perp K_x$, then $[f,K_x]=0$. Note that $f(x)=[f,K_x]=0$. Thus $f(x)=0$.

For the converse, suppose that $f(x)=0$. Observe that $0=f(x)=[f,K_x]$. Since $0=[f,K_x]$, then $f \perp K_x$, as desired.

Next, for the second statement, we observe the following:
\begin{align*}
\left[f-\tfrac{f(x)}{c}K_x,K_x\right]&=[f,K_x]-\left[\tfrac{f(x)}{c}K_x, K_x\right]\\
&=[f,K_x]-\tfrac{f(x)}{c}\cdot [K_x,K_x]\\
&= [f,K_x]-\tfrac{f(x)}{c} \cdot K_x(x)\\
&=[f,K_x]-\tfrac{f(x)}{c} \cdot c \\
&=[f,K_x]-f(x) \\
&= f(x) - f(x) \\
&= 0.
\end{align*}
Since $\left[f-\frac{f(x)}{c}K_x,K_x\right]=0$, then $\left(f-\frac{f(x)}{c}K_x\right)\perp K_x$, which was what we wanted.
\end{proof}

It is always beneficial to know when two kernels are orthogonal, as we shall see below. But with respect to Lemma \ref{lemperp}, notice that this implies that for any two kernels $K_x$ and $K_y$, we have that $K_x\perp K_y$ if and only if $K_x(y)=0$. To wit, we can fully encode orthogonality in terms of evaluation.

\begin{proposition}
If $\{K_{x_i}\}_{i=1}^n$ is a finite set of kernels that is pairwise orthogonal, then the set is linearly independent.
\end{proposition}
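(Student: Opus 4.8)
The plan is to prove linear independence directly from the definition. Suppose we have scalars $\lambda_1, \dots, \lambda_n$ such that $\sum_{i=1}^n \lambda_i K_{x_i} = 0$, and I would aim to show that each $\lambda_j = 0$. The natural tool is the inner product, since orthogonality is stated in those terms and we have the reproducing property available.

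The key step is to fix an index $j$ and take the inner product of the relation $\sum_{i=1}^n \lambda_i K_{x_i} = 0$ with $K_{x_j}$. By linearity of the inner product in the first argument, this gives $\sum_{i=1}^n \lambda_i [K_{x_i}, K_{x_j}] = 0$. Pairwise orthogonality means $[K_{x_i}, K_{x_j}] = 0$ whenever $i \neq j$, so every term in the sum vanishes except the one with $i = j$, leaving $\lambda_j [K_{x_j}, K_{x_j}] = 0$. By Lemma~\ref{lemmafirst}\eqref{lem1norm}, we have $[K_{x_j}, K_{x_j}] = \|K_{x_j}\|_2^2 = c$, and since $c$ is a positive real number (as established in Section~\ref{prelims}), it follows that $\lambda_j = 0$. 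As $j$ was arbitrary, all coefficients vanish and the set is linearly independent.

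The argument is essentially routine once the setup is in place, so there is no serious obstacle; the only point requiring care is invoking that $c > 0$ rather than merely $c \neq 0$, which guarantees we can divide through. This is exactly why part~\eqref{cpart} of Theorem~\ref{oldThm}, emphasized in the preliminaries, matters here: it ensures the kernels are all nontrivial with a common positive norm-squared, so no kernel is the zero vector and each can be canceled legitimately. One could alternatively phrase the conclusion using the evaluation characterization of orthogonality noted after Lemma~\ref{lemperp}, but the direct inner-product computation is cleaner and I would present it that way.
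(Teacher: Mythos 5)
Your proof is correct, and it rests on the same core computation as the paper's: pair the dependence relation against a fixed kernel, let orthogonality kill the cross terms, and use $c=[K_{x_j},K_{x_j}]>0$ to force the coefficient to vanish. The structural difference is that you run this argument for arbitrary $n$ directly, whereas the paper opens by claiming ``it is sufficient to show this is true for $n=2$'' and only carries out the two-kernel case. Your version is the stronger one: for linear independence that reduction is not actually valid as stated, since pairwise linear independence of a set does not in general imply linear independence of the whole set (three pairwise-independent vectors in a plane already fail), so the paper's proof as written has a gap that your uniform-in-$n$ argument avoids. The only cosmetic difference beyond that is bookkeeping: the paper solves for one coefficient and then argues the other vanishes because $K_{x_1}\neq 0$ (via Theorem \ref{oldThm}\eqref{cpart}), while you treat every index symmetrically by pairing with each $K_{x_j}$ in turn, which is cleaner and scales to all $n$ with no extra work.
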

\begin{proof}
It is sufficient to show this is true for $n=2$. We start by considering the finite set of nonzero kernels that is pairwise orthogonal $\{K_{x_i}\}_{i=1}^2$ with the homogeneous equation
$$
c_1K_{x_1}+c_2K_{x_2}=\Vec{0}.
$$
We then see that
$$
c_1K_{x_1}=-c_2K_{x_2}.
$$
Since we are supposing $K_{x_1}\perp K_{x_2}$, then $0=[K_{x_1},K_{x_2}]$. Therefore
$$
0=[K_{x_1},K_{x_2}]=c_1[K_{x_1},K_{x_2}]=[c_1K_{x_1},K_{x_2}]=[-c_2K_{x_2},K_{x_2}]=-c_2[K_{x_2},K_{x_2}]=-c_2\cdot c.
$$
Since $c>0$, it then follows that $-c_2=0$, meaning $c_2=0$. Also, since $c_1K_{x_1}=-c_2K_{x_2}=0$ with $K_{x_1}$ nonzero, we conclude that $c_1=0$. Thus, the only solution to the homogeneous equation is the trivial solution, meaning this set is linearly independent.
\end{proof}

\begin{proposition}
We have that $f=\frac{f(x)}{c}K_x$ if and only if $||f||_2^2=\frac{|f(x)|^2}{c}$.
\end{proposition}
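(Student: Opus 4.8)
The plan is to treat this as a characterization of the equality case in the Cauchy--Schwarz Inequality, whose addendum the preliminaries flagged for exactly this purpose. The forward direction is a direct computation: assuming $f=\frac{f(x)}{c}K_x$, I would pull the scalar out of the norm and invoke Lemma~\ref{lemmafirst}\eqref{lem1norm} to replace $\|K_x\|_2^2$ by $c$, obtaining
$$
\|f\|_2^2=\frac{|f(x)|^2}{c^2}\|K_x\|_2^2=\frac{|f(x)|^2}{c^2}\cdot c=\frac{|f(x)|^2}{c},
$$
which is the desired identity.

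For the converse, the key observation is that the reproducing property $f(x)=[f,K_x]$ lets me rewrite the hypothesis in terms of the inner product. Applying Cauchy--Schwarz to $f$ and $K_x$ and using $\|K_x\|_2=\sqrt{c}$ gives $|f(x)|=|[f,K_x]|\le\|f\|_2\sqrt{c}$, that is, $\frac{|f(x)|^2}{c}\le\|f\|_2^2$ in general. The assumption $\|f\|_2^2=\frac{|f(x)|^2}{c}$ says precisely that this is an \emph{equality}, so the addendum to the Cauchy--Schwarz Inequality forces $f=\lambda K_x$ for some scalar $\lambda$. Evaluating at $x$ and using $K_x(x)=c$ then pins down $\lambda=\frac{f(x)}{c}$, giving $f=\frac{f(x)}{c}K_x$.

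I expect no serious obstacle here; the work is really in recognizing that the hypothesis is exactly the equality case of Cauchy--Schwarz. One mild point of care is the degenerate situation $f(x)=0$: then the hypothesis reads $\|f\|_2=0$, so $f=0$ and both sides of the claimed identity vanish, which I would note to confirm that solving for $\lambda$ causes no trouble. As an alternative that avoids the equality addendum altogether, I could instead use the orthogonal decomposition from Lemma~\ref{lemperp}: since $\big(f-\frac{f(x)}{c}K_x\big)\perp K_x$, the Pythagorean identity yields $\|f\|_2^2=\frac{|f(x)|^2}{c}+\big\|f-\frac{f(x)}{c}K_x\big\|_2^2$, so the hypothesis forces the residual norm to vanish and hence $f=\frac{f(x)}{c}K_x$.
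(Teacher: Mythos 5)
Your proof is correct, and your primary route for the converse is genuinely different from the paper's. The paper does not use the Cauchy--Schwarz addendum here: instead it introduces the orthogonal projection $\pi_x$ of $H$ onto $\operatorname{span}\{K_x\}$, derives $\pi_x f=\frac{f(x)}{c}K_x$ and $\pi_x^\perp f=f-\frac{f(x)}{c}K_x$, and applies the Pythagorean identity $\|f\|_2^2=\|\pi_x f\|_2^2+\|\pi_x^\perp f\|_2^2$ to force the residual norm to vanish --- which is precisely the alternative you sketch at the end via Lemma~\ref{lemperp}, so your fallback is essentially the paper's argument in streamlined form. Your main argument is shorter: after squaring (legitimate, as both sides are nonnegative), the hypothesis $\|f\|_2^2=\frac{|f(x)|^2}{c}$ is exactly the equality case of $|[f,K_x]|\le\|f\|_2\|K_x\|_2$, the addendum gives $f=\lambda K_x$, and evaluating at $x$ pins down $\lambda=\frac{f(x)}{c}$; the one point worth making explicit is that $K_x\neq 0$ (guaranteed since $K_x(x)=c>0$), so that the equality case genuinely produces $f$ as a multiple of $K_x$ rather than holding vacuously. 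Notably, your approach mirrors what the paper itself does in the proof of Theorem~\ref{tfaemain}, $(d)\Rightarrow(a)$, so it is very much in the spirit of the paper. What the projection argument buys is the explicit formula for $\pi_x f$ (of some independent interest) and independence from the equality case of Cauchy--Schwarz; what yours buys is brevity and the recognition that the proposition literally characterizes Cauchy--Schwarz equality against $K_x$. Your separate treatment of the degenerate case $f(x)=0$ is harmless but unnecessary, since the equality-case argument already covers it with $\lambda=0$.
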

\begin{proof}
Suppose $f=\frac{f(x)}{c}K_x$. Notice that
$$
||f||_2^2=[f,f]=\left[\tfrac{f(x)}{c}K_x,\tfrac{f(x)}{c}K_x\right]=\tfrac{f(x)}{c}\overline{\left(\tfrac{f(x)}{c}\right)}[K_x,K_x]=\left|\tfrac{f(x)}{c}\right|^2c=\tfrac{|f(x)|^2}{|c|^2}c=\tfrac{|f(x)|^2}{c},
$$
as desired.

To prove the converse, let $\pi_x$ be the orthogonal projection of $H$ onto $\text{span}\{K_x\}$, and $\pi_x^\perp$ the projection orthogonal to $\text{span}\{K_x\}$. Thus, for every $f\in H$, we have $\pi_x f=\lambda K_x$ for some scalar $\lambda$. Observe that $\pi_x f(x)=\lambda K_x(x)=\lambda c$. Since every $f\in H$ has an orthogonal decomposition $f=\pi_x f+\pi_x^\perp f$, and since $\pi_x^\perp f(x)=[\pi_x^\perp f,K_x]=0$, we get that $\pi_x f(x)=f(x)$. Thus, we get $\lambda=\frac{f(x)}{c}$, and in particular, we have $\pi_x f=\frac{f(x)}{c}K_x$ and $\pi_x^\perp f=f-\frac{f(x)}{c}K_x$ for all $f\in H$. A basic result of Hilbert space theory yields that $$||f||_2^2=||\pi_x f||_2^2+||\pi_x^\perp f||_2^2.$$ Thus, 
\begin{align*}
||f||_2^2&=||\pi_x f||_2^2+||\pi_x^\perp f||_2^2\\
&=||\tfrac{f(x)}{c}K_x||_2^2+||f-\tfrac{f(x)}{c}K_x||_2^2\\
&=\tfrac{|f(x)|^2}{c^2}||K_x||_2^2+||f-\tfrac{f(x)}{c}K_x||_2^2\\
&=\tfrac{|f(x)|^2}{c}+||f-\tfrac{f(x)}{c}K_x||_2^2.
\end{align*}
When $||f||_2^2=\frac{|f(x)|^2}{c}$, it must be the case that $||f-\frac{f(x)}{c}K_x||_2^2=0$, from which we get that $f-\frac{f(x)}{c}K_x=0$, and the result follows.
\end{proof}

\begin{proposition}\label{thmortho}
Let $H$ be finite-dimensional, and suppose $\{K_{x_i}\}_{i=1}^n$ is an orthogonal basis for $H$. We have that $f=\sum_{i=1}^n\frac{f(x_i)}{c}K_{x_i}$ for all $f\in H$.
\end{proposition}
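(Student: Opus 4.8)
The plan is to treat this as a standard orthogonal-basis expansion, but to identify the expansion coefficients using the reproducing property together with the normalization $\|K_{x_i}\|_2^2 = c$ from Lemma \ref{lemmafirst}(i). Since $\{K_{x_i}\}_{i=1}^n$ is a basis for the finite-dimensional space $H$, every $f\in H$ admits a unique representation $f = \sum_{i=1}^n a_i K_{x_i}$ for scalars $a_1,\dots,a_n$. It therefore suffices to show that $a_j = \frac{f(x_j)}{c}$ for each $j$, and then substitute back.

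The key step is to fix an index $j$ and pair both sides of the expansion with $K_{x_j}$. On one hand, the reproducing property of Theorem \ref{oldThm} gives $[f,K_{x_j}] = f(x_j)$. On the other hand, expanding $f$ and using linearity of the inner product in the first slot, the pairwise orthogonality of the basis collapses the sum to the single surviving term: $\left[\sum_{i=1}^n a_i K_{x_i}, K_{x_j}\right] = a_j [K_{x_j}, K_{x_j}] = a_j \|K_{x_j}\|_2^2 = a_j c$, where the final equality invokes Lemma \ref{lemmafirst}(i). Equating the two evaluations of $[f,K_{x_j}]$ yields $f(x_j) = a_j c$, and since $c>0$ we may divide to obtain $a_j = \frac{f(x_j)}{c}$. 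Substituting these coefficients into the expansion gives the claimed identity $f = \sum_{i=1}^n \frac{f(x_i)}{c} K_{x_i}$.

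I do not anticipate a genuine obstacle here; the argument is the familiar Fourier-coefficient computation for an orthogonal basis, and the finite-dimensionality hypothesis is exactly what guarantees the finite expansion exists and makes the interchange of the finite sum with the inner product immediate. The only features specific to this setting are the two substitutions that make the coefficient clean, namely replacing $[f,K_{x_j}]$ by $f(x_j)$ via the reproducing property and replacing $\|K_{x_j}\|_2^2$ by the constant $c$ via Lemma \ref{lemmafirst}, both of which are already available.
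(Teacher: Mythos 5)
Your proof is correct and follows essentially the same route as the paper: expand $f$ in the orthogonal basis, pair with $K_{x_j}$, and use orthogonality together with the reproducing property and $[K_{x_j},K_{x_j}]=c$ to identify the coefficients. The only difference is cosmetic—the paper carries out this computation after reducing to the case $n=2$, while you run the identical argument uniformly for general $n$, which is if anything slightly cleaner.
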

\begin{proof}
It is sufficient to show this is true for $n=2$. We note that since $\{K_{x_i}\}_{i=1}^2$ is a basis for $H$, then $f=a_1K_{x_1}+a_2K_{x_2}$ for some scalars $a_1$ and $a_2$, and so $f-a_2K_{x_2}=a_1K_{x_1}$. Further, since the elements of $\{K_{x_i}\}_{i=1}^2$ are pairwise orthogonal, we know that $[K_{x_1},K_{x_2}]=0$. In particular, $0=a_1[K_{x_1},K_{x_2}]$, and so
\begin{align*}
0&=a_1[K_{x_1},K_{x_2}]\\
&=[a_1K_{x_1},K_{x_2}] \\
&=[f-a_2K_{x_2},K_{x_2}] \\
&=[f,K_{x_2}]+[-a_2K_{x_2},K_{x_2}]\\
&=f(x_2)-a_2[K_{x_2},K_{x_2}]\\
&=f(x_2)-a_2c.
\end{align*}
Since $0=f(x_2)-a_2c$, then we easily get that $a_2=\frac{f(x_2)}{c}$. Also, since $f-a_1K_{x_1}=a_2K_{x_2}$ and $[K_{x_2},K_{x_1}]=0$, we can follow similar steps to those above to show that $a_1=\frac{f(x_1)}{c}$. Thus, we have shown that $f=\frac{f(x_1)}{c}K_{x_1}+\frac{f(x_2)}{c}K_{x_2}=\sum_{i=1}^2\frac{f(x_i)}{c}K_{x_i}$.
\end{proof}

\begin{corollary}\label{corortho}
Let $H$ be finite-dimensional, and suppose $\{K_{x_i}\}_{i=1}^n$ is an orthogonal basis for $H$. We have that $\sum_{i=1}^n|K_{x_i}(x)|^2=c^2$ for all $x\in X$.
\end{corollary}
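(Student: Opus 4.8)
The plan is to apply Proposition \ref{thmortho} to the single function $f = K_x$ for an arbitrary but fixed $x \in X$, and then to extract the desired identity by combining orthogonality of the basis with the norm computation from Lemma \ref{lemmafirst}. This is a direct corollary, so I do not anticipate a genuine obstacle; the work is in arranging the pieces so the indices land in the orientation the statement demands.

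First I would fix $x \in X$ and observe that, since $K_x \in H$, Proposition \ref{thmortho} gives the expansion $K_x = \sum_{i=1}^n \tfrac{K_x(x_i)}{c} K_{x_i}$. Because the basis $\{K_{x_i}\}_{i=1}^n$ is orthogonal, the summands $\tfrac{K_x(x_i)}{c} K_{x_i}$ are pairwise orthogonal as well, so the Pythagorean theorem lets me compute the squared norm term by term: $\|K_x\|_2^2 = \sum_{i=1}^n \big|\tfrac{K_x(x_i)}{c}\big|^2 \|K_{x_i}\|_2^2$. Invoking part (\ref{lem1norm}) of Lemma \ref{lemmafirst}, each $\|K_{x_i}\|_2^2 = c$, and likewise $\|K_x\|_2^2 = c$ on the left-hand side. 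Substituting these collapses the identity to $c = \tfrac{1}{c}\sum_{i=1}^n |K_x(x_i)|^2$, which rearranges to $\sum_{i=1}^n |K_x(x_i)|^2 = c^2$.

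The last step is to rewrite this in the form stated. Using the conjugate-symmetry property $K_x(x_i) = \overline{K_{x_i}(x)}$ from part (1) of Theorem \ref{oldThm}, we have $|K_x(x_i)| = |K_{x_i}(x)|$ for each $i$, and therefore $\sum_{i=1}^n |K_{x_i}(x)|^2 = \sum_{i=1}^n |K_x(x_i)|^2 = c^2$, which is exactly the claim. The only point requiring any care is this bookkeeping on the symmetry relation, ensuring the evaluation $|K_x(x_i)|$ is converted into $|K_{x_i}(x)|$ so that the summation matches the statement.
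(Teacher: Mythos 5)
Your proof is correct, but it extracts the identity by a genuinely different mechanism than the paper. Both arguments begin by applying Proposition \ref{thmortho} to $f=K_x$, but where you take the squared norm of the expansion and invoke the Pythagorean theorem together with $\|K_{x_i}\|_2^2=\|K_x\|_2^2=c$ from Lemma \ref{lemmafirst}\eqref{lem1norm}, the paper instead evaluates the expansion pointwise at $x$: since $K_x(x)=c$, this yields $c^2=\sum_{i=1}^n K_x(x_i)\,K_{x_i}(x)$, and conjugate symmetry ($K_x(x_i)=\overline{K_{x_i}(x)}$, Theorem \ref{oldThm}(1)) is then the essential step that recognizes each product as $|K_{x_i}(x)|^2$. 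In your argument, symmetry plays only a bookkeeping role at the very end (flipping $|K_x(x_i)|$ to $|K_{x_i}(x)|$), while the norm computation does the real work. Your route has two modest advantages: it is written directly for arbitrary $n$ (the paper reduces to the $n=2$ case), and it makes visible the Parseval-type identity $\|f\|_2^2=\tfrac{1}{c}\sum_{i=1}^n|f(x_i)|^2$ for all $f\in H$, of which the corollary is the special case $f=K_x$; the paper's route is marginally more elementary, needing nothing beyond evaluation and symmetry.
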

\begin{proof}
Again, it is sufficient to show this is true for $n=2$. Due to Proposition \ref{thmortho} with $f=K_x$, and recalling that $c=K_x(x)$, we have that 
\begin{align*}
K_x(x)&=\tfrac{K_x(x_1)}{c} \cdot K_{x_1}(x) +\tfrac{K_x(x_2)}{c} \cdot K_{x_2} (x) \\
c^2&=K_x(x_1) \cdot K_{x_1}(x) + K_x(x_2) \cdot K_{x_2} (x).
\end{align*}
Observe the following: 
\begin{align*}
|K_{x_1}(x)|^2 +|K_{x_2}(x)|^2 &=K_{x_1}(x) \cdot \overline{K_{x_1}(x) } + K_{x_2}(x) \cdot \overline{K_{x_2}(x)}\\
&= K_x(x_1) \cdot K_{x_1}(x) + K_x(x_2) \cdot K_{x_2}{(x)}\\
&=c^2.
\end{align*}
Hence, $|K_{x_1}(x)|^2+|K_{x_2}(x)|^2=c^2$, as desired.
\end{proof}

For the remainder of Section \ref{main}, we investigate the relationship for when two kernels are scalar multiples of each other. As we will see, especially in Theorem \ref{tfaemain} and Section \ref{more}, the consequences are as far-reaching as they are interesting.

\begin{definition}
Let $x,y\in X$. We say that $x\sim y$ if and only if $K_y=\lambda K_x$ for some scalar $\lambda\neq 0$.
\end{definition}

\begin{lemma}\label{eqrel}
The relation $\sim$ defines an equivalence relation.
\end{lemma}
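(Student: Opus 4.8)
The plan is to verify directly the three defining properties of an equivalence relation—reflexivity, symmetry, and transitivity—since $\sim$ is defined purely in terms of the existence of a nonzero scalar relating two kernels. Each property reduces to a one-line manipulation of that scalar, and the only point requiring attention is to confirm that the scalars produced at each step remain nonzero. This is exactly what the condition $\lambda\neq 0$ in the definition is built to guarantee.

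For reflexivity, given $x\in X$ I would simply note that $K_x=1\cdot K_x$ with $1\neq 0$, so $x\sim x$. For symmetry, I would start from $x\sim y$, meaning $K_y=\lambda K_x$ for some $\lambda\neq 0$; dividing by $\lambda$ (which is permissible precisely because $\lambda\neq 0$) gives $K_x=\tfrac{1}{\lambda}K_y$ with $\tfrac{1}{\lambda}\neq 0$, hence $y\sim x$. For transitivity, I would assume $x\sim y$ and $y\sim z$, so that $K_y=\lambda K_x$ and $K_z=\mu K_y$ for nonzero scalars $\lambda,\mu$; substituting yields $K_z=\mu\lambda K_x$, and $\mu\lambda\neq 0$ because $\mathbb{C}$ has no zero divisors, giving $x\sim z$.

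I do not anticipate any genuine obstacle, as the statement is a formal consequence of the fact that the nonzero complex scalars form a group under multiplication: reflexivity uses the identity, symmetry uses inverses, and transitivity uses closure under products. If anything deserves emphasis, it is that the requirement $\lambda\neq 0$ in the definition is exactly the feature that makes $\sim$ symmetric—without it the relation could not be reversed—and this requirement is entirely consistent with the earlier observation that no kernel $K_x$ is trivial, so any scalar relating two kernels is automatically nonzero.
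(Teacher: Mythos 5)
Your proof is correct and follows essentially the same route as the paper's: reflexivity via $\lambda=1$, symmetry via $K_x=\tfrac{1}{\lambda}K_y$, and transitivity via the product of the two nonzero scalars. Your closing observation that nontriviality of the kernels would force any relating scalar to be nonzero is a nice extra remark, but otherwise the two arguments coincide.
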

\begin{proof}
First we observe that $K_x = \lambda K_x$ for all $x\in X$ with $\lambda=1$. Thus, $x \sim x$, and so our relation is reflexive.

Next, we let $x,y \in X$ and we suppose $x \sim y$, and therefore $K_y = \lambda K_x$ for some scalar $\lambda\neq0$. Then we see $K_x=\frac{1}{\lambda} K_y$, which implies $y \sim x$. Therefore, our relation is symmetric.

Finally, let $x,y,z \in X$ and suppose $x \sim y$ and $y \sim z$. By definition, $K_y = \lambda_1 K_x$ and $K_z = \lambda_2 K_y$ for some scalars $\lambda_1\neq0$ and $\lambda_2\neq0$. Taking $\lambda=\lambda_2\lambda_1$, where we notice that $\lambda\neq0$, we have
$$
\lambda K_x=(\lambda_2\lambda_1)K_x=\lambda_2(\lambda_1K_x)=\lambda_2K_y=K_z.
$$
Thus, $x \sim z$, and so our relation is transitive.

Hence, $\sim$ is an equivalence relation.
\end{proof}

\begin{theorem}\label{tfaemain}
Let $x,y\in X$. The following are equivalent:
\begin{enumerate}[(a)]
    \item $x\sim y$,
    \item $\beta x\sim \beta y$ for all $\beta \in G$,
    \item $|K_z(x)|=|K_z(y)|$ for all $z \in X$, and
    \item $|K_x(y)|=c$.
\end{enumerate}
\end{theorem}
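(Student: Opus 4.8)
The plan is to prove the equivalence by establishing the cycle $(a)\Rightarrow(c)\Rightarrow(d)\Rightarrow(a)$ together with the separate equivalence $(a)\Leftrightarrow(b)$. Before beginning the cycle, I would record one small but decisive observation: whenever $x\sim y$, so that $K_y=\lambda K_x$ for some $\lambda\neq0$, the multiplier must satisfy $|\lambda|=1$. This is immediate from Lemma \ref{lemmafirst}\eqref{lem1norm}, since $c=||K_y||_2^2=||\lambda K_x||_2^2=|\lambda|^2||K_x||_2^2=|\lambda|^2c$ and $c>0$ force $|\lambda|^2=1$.

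For $(a)\Leftrightarrow(b)$, I would use the group-action identity $K_{\varphi_\beta(z)}=K_z\circ\varphi_{\beta^{-1}}$ from Theorem \ref{oldThm}\eqref{oldact}. If $K_y=\lambda K_x$, then composing both sides with $\varphi_{\beta^{-1}}$ gives $K_{\beta y}=\lambda K_{\beta x}$, whence $\beta x\sim\beta y$ for every $\beta\in G$; conversely, choosing $\beta$ to be the identity of $G$ recovers $x\sim y$. For $(a)\Rightarrow(c)$, I would write $K_y=\lambda K_x$ with $|\lambda|=1$ and combine it with the conjugate-symmetry relation $K_z(w)=\overline{K_w(z)}$ from Theorem \ref{oldThm}: for any $z\in X$ we get $|K_z(y)|=|K_y(z)|=|\lambda|\,|K_x(z)|=|K_x(z)|=|K_z(x)|$, which is exactly $(c)$. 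The implication $(c)\Rightarrow(d)$ is then obtained by specializing $z=x$ and recalling $K_x(x)=c>0$, so that $c=|K_x(x)|=|K_x(y)|$.

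The crux of the argument is $(d)\Rightarrow(a)$, and this is precisely where the addendum to the Cauchy-Schwarz Inequality is needed. Writing $K_x(y)=\overline{K_y(x)}=[K_x,K_y]$ and using $||K_x||_2=||K_y||_2=\sqrt c$ from Lemma \ref{lemmafirst}\eqref{lem1norm}, the hypothesis $|K_x(y)|=c$ becomes $|[K_x,K_y]|=||K_x||_2\cdot||K_y||_2$, the equality case of Cauchy-Schwarz. This forces $K_x=\lambda K_y$ for some scalar $\lambda$, and $\lambda\neq0$ since $K_x$ is nontrivial (because $c>0$), giving $x\sim y$. I expect the main difficulty to be bookkeeping rather than conceptual: one must track the conjugations introduced by $K_z(w)=\overline{K_w(z)}$ carefully and verify that the Cauchy-Schwarz equality yields a genuinely nonzero multiplier, which is why the positivity of $c$ recorded after Theorem \ref{oldThm} is invoked throughout.
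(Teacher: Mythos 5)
Your proposal is correct and follows essentially the same route as the paper: the group-action identity $K_{\varphi_\beta(z)}=K_z\circ\varphi_{\beta^{-1}}$ handles the $(a)$--$(b)$ link, the unimodularity of $\lambda$ plus conjugate symmetry gives $(c)$, specializing $z=x$ gives $(d)$, and the equality case of Cauchy-Schwarz closes the loop back to $(a)$. The only differences are cosmetic: you prove $(a)\Leftrightarrow(b)$ separately rather than running a single four-step cycle, and your derivation of $|\lambda|=1$ from $c=\|\lambda K_x\|_2^2=|\lambda|^2c$ is a slight streamlining of the paper's evaluation-based computation of $\lambda$ and $\overline{\lambda}$.
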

\begin{proof}
Let $x,y\in X$. For $(a)\Rightarrow(b)$, we suppose $x\sim y$ and we let $\beta\in G$. Since $x\sim y$, then $K_y=\lambda K_x$ for some scalar $\lambda\neq 0$, and so $K_y\circ\varphi_{\beta^{-1}}=\lambda K_x\circ\varphi_{\beta^{-1}}$. Using Theorem \ref{oldThm}\eqref{oldact}, it follows that $K_{\beta y}=\lambda K_{\beta x}$. Thus, $\beta x\sim \beta y$, as desired.

For $(b)\Rightarrow(c)$, we let $z\in X$ and suppose $\beta x\sim \beta y$ for all $\beta \in G$. Taking $\beta=e$, we get that $x\sim y$, and so $K_y=\lambda K_x$ for some scalar $\lambda\neq0$. As consequence, we note that $K_y(x)=\lambda K_x(x)=\lambda c$, and so $\frac{K_y(x)}{c}=\lambda$. Additionally, we have that $c=K_y(y)=\lambda K_x(y)$, and so $\tfrac{c}{K_x(y)}=\lambda$, in which case
$$
\overline{\lambda}=\overline{\tfrac{c}{K_x(y)}}=\tfrac{\overline{c}}{\overline{K_x(y)}}=\tfrac{c}{K_y(x)}=\tfrac{1}{\lambda}.
$$
Therefore, we get that $|\lambda|^2=\lambda\overline{\lambda}=\lambda\frac{1}{\lambda}=1$, and hence, $|\overline{\lambda}|=|\lambda|=1$.
Now observe
$$
|K_z(y)|=|[K_z,K_y]|=|[K_z,\lambda K_x]|= |\overline{[\lambda K_x,K_z]}|=|\overline{\lambda}\overline{[K_x, K_z]}|=|\overline{\lambda}|\cdot|[K_z,K_x]|=|K_z(x)|,
$$
whence the result.

For $(c)\Rightarrow(d)$, we suppose $|K_z(x)|=|K_z(y)|$ for all $z \in X$. Taking $z=x$, we then have that $c=K_x(x)=|K_x(x)|=|K_x(y)|$.

For $(d)\Rightarrow(a)$, we suppose $|K_x(y)|=c$. Using $\sqrt{c}=||K_x||_2$ for any $x\in X$ by way of Lemma \ref{lemmafirst}\eqref{lem1norm}, observe that
$$
|[K_x, K_y]|=|K_x(y)|=c=\sqrt{c} \cdot \sqrt{c}= ||K_x||_2 \cdot ||K_y||_2.
$$
Since $|[K_x, K_y]| = ||K_x||_2 \cdot ||K_y||_2$, then by the addendum of the Cauchy-Schwarz Inequality, $K_x$ and $K_y$ must be scalar multiples of each other. Thus, $K_y=\lambda K_x$ for some scalar $\lambda\neq 0$, and as consequence $x\sim y$, which was what we wanted.
\end{proof}

\begin{remark}\label{remlambda}
Recall that whenever $x\sim y$, we have that $K_y=\lambda K_x$ for some scalar $\lambda\neq0$. Notice that within the proof of Theorem \ref{tfaemain}, we get $\lambda=\frac{K_y(x)}{c}$, and in particular $|\lambda|=1$.
\end{remark}

\begin{corollary}
For all $x,y\in X$, we have that $K_x(y)=c$ if and only if $K_y=K_x$.
\end{corollary}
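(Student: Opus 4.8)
The plan is to establish the biconditional by proving each implication separately, with the forward direction ($K_x(y)=c \Rightarrow K_y=K_x$) doing most of the work and the reverse being essentially immediate.

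For the forward direction, I would first note that since $c$ is a positive real number, the hypothesis $K_x(y)=c$ yields $|K_x(y)|=c$. This is precisely condition $(d)$ of Theorem \ref{tfaemain}, so by the equivalence established there (specifically $(d)\Rightarrow(a)$) we obtain $x\sim y$, giving $K_y=\lambda K_x$ for some scalar $\lambda\neq 0$. It then remains to pin down the exact value of $\lambda$. By Remark \ref{remlambda} we have $\lambda=\frac{K_y(x)}{c}$, so I would evaluate $K_y(x)$ using the conjugate-symmetry relation $K_x(y)=\overline{K_y(x)}$ from Theorem \ref{oldThm}: since $K_x(y)=c$ and $c$ is real, this forces $K_y(x)=\overline{c}=c$, whence $\lambda=\frac{c}{c}=1$ and therefore $K_y=K_x$.

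For the reverse direction, I would suppose $K_y=K_x$ and simply evaluate at $y$, obtaining $K_x(y)=K_y(y)=c$, using the fact from Theorem \ref{oldThm}\eqref{cpart} (equivalently, the definition of $c$) that $K_y(y)=c$ for every $y\in X$. This completes that implication.

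The main subtlety — and the step I would be most careful about — is upgrading the conclusion $|\lambda|=1$ coming out of Remark \ref{remlambda} to the exact value $\lambda=1$. It would be tempting to stop once $x\sim y$ is known, but that alone only makes $K_y$ a unimodular multiple of $K_x$, not literally equal to it. The conjugate-symmetry identity together with the reality of $c$ is exactly what rules out the other unimodular scalars, and I expect this to be the crux of the argument.
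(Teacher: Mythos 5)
Your proof is correct and follows exactly the paper's own argument: both directions proceed identically, using Theorem \ref{tfaemain} $(d)\Rightarrow(a)$ to get $x\sim y$, then Remark \ref{remlambda} together with the conjugate-symmetry $K_x(y)=\overline{K_y(x)}$ and the positivity of $c$ to force $\lambda=1$, with the converse following by evaluating $K_y=K_x$ at $y$. No discrepancies to report.
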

\begin{proof}
Let $x,y \in X$. Suppose that $K_x(y)=c$. Since $c>0$, then $c=|c|=|K_x(y)|$. Since $|K_x(y)|=c$, then by Theorem \ref{tfaemain} we get that $K_y=\lambda K_x$ for some scalar $\lambda\neq0$. Next, by Remark \ref{remlambda}, we know that
$$
\lambda=\tfrac{K_y(x)}{c}=\tfrac{\overline{K_x(y)}}{c}=\tfrac{\overline{c}}{c}=\tfrac{c}{c}=1.
$$
Thus, $K_y=\lambda K_x=K_x$.

For the converse, suppose that $K_x = K_y$. Observe that $K_x(y)=K_y(y)=c$.
\end{proof}

One of the main goals of this paper is to investigate the equivalence classes in $X$ under the relation $\sim$ and how they correspond to subgroups of $G$. It turns out that working with the classic stabilizer is too restrictive, but the stabilizer under the relation (defined below) provides the correct framework to help us begin to answer that question.

\begin{definition}
Fix $x\in X$. We define the \textbf{relation stabilizer} of $x$ as the set
$$
\mathcal{E}(x)=\{\gamma\in G~|~\gamma x\sim x\}.
$$
It is easy to verify that $\mathcal{E}(x)$ is a subgroup of $G$.
\end{definition}

Since each relation stabilizer $\mathcal{E}(x)$ is a subgroup of $G$, then of course $\mathcal{E}(x)\neq\varnothing$ because $e\in\mathcal{E}(x)$. However, one wonders if there are nontrivial elements of $G$ that stabilize each $x\in X$ under the relation. The answer comes down to making sure that each equivalence class $[x]$ is nontrivial:

\begin{lemma}\label{lemclass}
Let $x\in X$. Then $|[x]|>1$ if and only if $|\mathcal{E}(x)|>1$.
\end{lemma}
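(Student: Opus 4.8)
The plan is to read both sides of the equivalence off the transitive action of $G$ on $X$. The bridge is transitivity: for any $y\in X$ there is some $\gamma\in G$ with $\gamma x=y$, and since the condition $y\sim x$ is literally the condition $\gamma x\sim x$ appearing in the definition of $\mathcal{E}(x)$, we see that $y\in[x]$ exactly when $\gamma\in\mathcal{E}(x)$. Hence $[x]=\{\gamma x:\gamma\in\mathcal{E}(x)\}$ is the orbit of $x$ under the subgroup $\mathcal{E}(x)$, and I would prove the two implications using this identification.

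For the forward implication, suppose $|[x]|>1$ and choose $y\in[x]$ with $y\neq x$; thus $y\sim x$. By transitivity pick $\gamma\in G$ with $\gamma x=y$. Since $y\neq x=ex$, necessarily $\gamma\neq e$, while $\gamma x=y\sim x$ gives $\gamma\in\mathcal{E}(x)$. Then $e$ and $\gamma$ are distinct elements of $\mathcal{E}(x)$, so $|\mathcal{E}(x)|>1$. I expect this direction to be immediate.

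The backward implication is where the real care is needed. Assuming $|\mathcal{E}(x)|>1$, I can select $\gamma\in\mathcal{E}(x)$ with $\gamma\neq e$, and then $\gamma x\sim x$ places $\gamma x$ in $[x]$. To conclude $|[x]|>1$ I must know that $\gamma x\neq x$, and this is the main obstacle: the classical stabilizer $\{\beta\in G:\beta x=x\}$ is itself a subgroup of $\mathcal{E}(x)$, since any $\beta$ fixing $x$ trivially satisfies $\beta x\sim x$, so a nontrivial $\gamma\in\mathcal{E}(x)$ might fix $x$ and contribute nothing new to $[x]$. The structural way to see the situation is the orbit-stabilizer correspondence: $|[x]|$ equals the index of $\{\beta\in G:\beta x=x\}$ in $\mathcal{E}(x)$, so $|[x]|>1$ is equivalent to $\mathcal{E}(x)$ strictly containing this stabilizer. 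The crux is therefore to guarantee that $\mathcal{E}(x)$ contains an element that genuinely moves $x$; I would complete the proof by choosing $\gamma\in\mathcal{E}(x)$ with $\gamma x\neq x$ and confirming, under the standing hypotheses on $H$, that a nontrivial relation stabilizer supplies such an element rather than only point-fixing ones. Securing that final point is the step I anticipate being the most delicate.
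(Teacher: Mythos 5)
Your forward implication is complete and is precisely the paper's own argument: pick $y\in[x]$ with $y\neq x$, write $y=\gamma x$ by transitivity, note $\gamma\neq e$ since $\gamma x=y\neq x$, and conclude that $e$ and $\gamma$ are distinct elements of $\mathcal{E}(x)$. Your backward implication, however, is not finished: you reduce it to producing an element of $\mathcal{E}(x)$ that genuinely moves $x$, and you explicitly defer that step. As written, the proposal is therefore not a proof.

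But the point you refused to gloss over is a genuine one, and it deserves to be stated plainly: that step cannot be carried out from the standing hypotheses. As you observe, the classical stabilizer $G_x=\{\beta\in G : \beta x=x\}$ always sits inside $\mathcal{E}(x)$ by reflexivity of $\sim$, and nothing in the paper makes the action free --- indeed Theorem \ref{oldThm}(4) explicitly contemplates $\alpha\neq e$ with $\alpha x=x$. The paper's own proof of the converse direction fills the gap with the assertion ``since we know that $\alpha\neq e$, then we know that $\alpha x\neq x$,'' which is exactly the inference you balked at, and it is invalid unless the action is free. In fact the backward implication is false in general: take $X=\mathbb{RP}^2$, $G=SO(3)$, and $H$ the five-dimensional space of functions on $\mathbb{RP}^2$ induced by the degree-two spherical harmonics (these are even functions on $S^2$, so they descend); the kernels are $K_{[x]}([y])=c\,P_2(\langle x,y\rangle)$ with $P_2$ the Legendre polynomial. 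Since $|P_2(t)|<1$ for $|t|<1$, the criterion of Theorem \ref{tfaemain} (that $x\sim y$ exactly when $|K_x(y)|=c$) forces every equivalence class to be a singleton, yet $\mathcal{E}([x])\supseteq G_{[x]}\cong O(2)$ is far from trivial. The correct general statement is the one your orbit--stabilizer identity $|[x]|=[\mathcal{E}(x):G_x]$ delivers immediately: $|[x]|>1$ if and only if $\mathcal{E}(x)$ properly contains $G_x$. Lemma \ref{lemclass} as stated needs the additional hypothesis that the action is free (equivalently $G_x=\{e\}$ for all $x$), under which both your argument and the paper's close at once. So the gap you identified is real, but it is the lemma's gap, not a defect of your strategy.
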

\begin{proof}
Let $x\in X$. For the forward direction, we suppose $|[x]|>1$. This means there exists $y\in X$ with $x\neq y$ such that $x\sim y$. Since the action on $X$ is transitive, then there exists some $\alpha\in G$ such that $y=\alpha x$. Namely, since $x\neq y$, we know that $\alpha\neq e$. Therefore, we have that $\alpha x=y\sim x$, and so $\alpha\in\mathcal{E}(x)$. Hence, since $e,\alpha\in\mathcal{E}(x)$, we have that $|\mathcal{E}(x)|>1$.

For the converse, we suppose $|\mathcal{E}(x)|>1$. Therefore, we have that there exists some $\alpha\in\mathcal{E}(x)$ such that $\alpha\neq e$. Moreover, since $\alpha\in\mathcal{E}(x)$, we have that $\alpha x\sim x$. Set $y=\alpha x$. Since we know that $\alpha\neq e$, then we know that $\alpha x\neq x$. So $y\neq x$. However, we note that $y=\alpha x\sim x$, and so $y\sim x$. Thus, we have $x,y\in[x]$, and so $|[x]|>1$.
\end{proof}

\begin{proposition}\label{propclass}
Let $y\in X$. Then $|[y]|>1$ if and only if $|[x]|>1$ for all $x\in X$.
\end{proposition}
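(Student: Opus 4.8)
The plan is to exploit the transitivity of the $G$-action together with the equivariance of $\sim$ recorded in Theorem \ref{tfaemain}. The backward direction is immediate: if $|[x]|>1$ holds for every $x\in X$, then taking $x=y$ gives $|[y]|>1$. So the real content lies entirely in the forward direction.

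For the forward direction, I would begin by assuming $|[y]|>1$, which produces some $z\in X$ with $z\neq y$ and $z\sim y$. Now fix an arbitrary $x\in X$; the goal is to exhibit a second element of $[x]$. Since $G$ acts transitively on $X$, there is some $\beta\in G$ with $\beta y=x$. Applying the implication $(a)\Rightarrow(b)$ of Theorem \ref{tfaemain} to the relation $z\sim y$ then yields $\beta z\sim\beta y$, that is, $\beta z\sim x$.

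It remains to check that $\beta z$ is genuinely distinct from $x=\beta y$. Here I would invoke the fact that each $\varphi_\beta$ is a bijection of $X$ (with inverse $\varphi_{\beta^{-1}}$), so that $z\neq y$ forces $\beta z\neq\beta y=x$. Thus $x$ and $\beta z$ are two distinct members of $[x]$, giving $|[x]|>1$; since $x$ was arbitrary, the conclusion holds for all $x\in X$.

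I expect the only genuine subtlety to be the injectivity step, namely confirming $\beta z\neq\beta y$, which is precisely where the group structure (rather than merely a transitive set map) is used. Everything else follows directly from transitivity and the already-established equivalence $(a)\Leftrightarrow(b)$ in Theorem \ref{tfaemain}, so the proof should be short once these two ingredients are combined.
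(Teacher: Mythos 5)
Your proof is correct, but it takes a genuinely different route from the paper's. The paper never manipulates a second point of $[y]$ directly: it converts $|[y]|>1$ into $|\mathcal{E}(y)|>1$ via Lemma \ref{lemclass}, conjugates the resulting non-identity element $\beta\in\mathcal{E}(y)$ by a transitivity element $\alpha$ (with $y=\alpha x$) to produce $\alpha^{-1}\beta\alpha\in\mathcal{E}(x)\setminus\{e\}$, and then applies Lemma \ref{lemclass} a second time to return to $|[x]|>1$. You instead transport the witnessing pair itself: from $z\sim y$, $z\neq y$, and $\beta y=x$, you get $\beta z\sim x$ by Theorem \ref{tfaemain}$(a)\Rightarrow(b)$ and $\beta z\neq x$ because $\varphi_\beta$ is injective. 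Your argument is shorter, needs only equivariance of $\sim$ plus bijectivity of each $\varphi_\beta$, and --- importantly --- it is actually more watertight than the paper's. The paper's detour depends on the converse half of Lemma \ref{lemclass} (that $|\mathcal{E}(x)|>1$ forces $|[x]|>1$), whose proof tacitly assumes that a non-identity element of $G$ must move $x$; this fails whenever the action has nontrivial point stabilizers, since $\mathrm{Stab}(x)\subseteq\mathcal{E}(x)$ always (any $\alpha$ fixing $x$ satisfies $\alpha x\sim x$ trivially), so a non-identity element of $\mathcal{E}(x)$ need not produce a second element of $[x]$. Your injectivity step, $z\neq y\Rightarrow\beta z\neq\beta y$, is exactly the correct substitute for that reasoning and is unconditionally valid. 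What the paper's route buys in exchange is thematic rather than logical: it keeps the correspondence between equivalence classes and relation stabilizers in the foreground, which is the organizing idea of Sections \ref{main} and \ref{more}, and the conjugation computation it rehearses reappears later in Lemma \ref{lemeprop}.
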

\begin{proof}
Let $y\in X$. For the forward direction, we suppose $|[y]|>1$. By Lemma \ref{lemclass}, we then know that $|\mathcal{E}(y)|>1$, in which case there exists some $\beta\in\mathcal{E}(y)$ such that $\beta\neq e$. In particular, we have that $\beta y\sim y$. Next, we let $x\in X$ be arbitrary. Since the action on $X$ is transitive, then there exists some $\alpha\in G$ such that $y=\alpha x$. Therefore $\alpha^{-1}y=x$. Consider $\alpha^{-1}\beta\alpha\in G$. Notice that
$$
(\alpha^{-1}\beta\alpha)x=\alpha^{-1}\beta(\alpha x)=\alpha^{-1}\beta y\sim\alpha^{-1}y=x,
$$
and so $(\alpha^{-1}\beta\alpha)x\sim x$. Thus $\alpha^{-1}\beta\alpha\in\mathcal{E}(x)$. For the sake of contradiction, suppose that $\alpha^{-1}\beta\alpha=e$. Therefore
$$
\beta=(\alpha\alpha^{-1})\beta(\alpha\alpha^{-1})=\alpha(\alpha^{-1}\beta\alpha)\alpha^{-1}=\alpha e\alpha^{-1}=\alpha\alpha^{-1}=e.
$$
This is our contradiction since we know that $\beta\neq e$, and so we must have that $\alpha^{-1}\beta\alpha\neq e$. Hence, we know that $e\in\mathcal{E}(x)$ and $\alpha^{-1}\beta\alpha\in\mathcal{E}(x)$, and so $|\mathcal{E}(x)|>1$. Again by Lemma \ref{lemclass}, we then get that $|[x]|>1$. Since $x$ was chosen arbitrarily, we have that $|[x]|>1$ for all $x\in X$.

For the converse, we suppose $|[x]|>1$ for all $x\in X$. Then since $y\in X$, we get that $|[y]|>1$, which was what we wanted.
\end{proof}

Notice that Lemma \ref{lemclass} and Proposition \ref{propclass} have heavy implications. If we have one element $x\in X$ such that its equivalence class is trivial, then every equivalence class in $X$ is also trivial. Moreover, if this were the case, every corresponding relation stabilizer subgroup $\mathcal{E}(x)$ would be trivial as well. This would be uninteresting. To avoid this, we need just a single equivalence class to be imposed as nontrivial, and therefore they all are.

We finish this section with how the relation on $X$ tells us information about the corresponding relation stabilizers in $G$.

\begin{theorem}\label{thmforward}
If $x\sim y$, then $\mathcal{E}(x)=\mathcal{E}(y)$.
\end{theorem}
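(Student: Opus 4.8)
The plan is to establish the set equality by proving the two inclusions $\mathcal{E}(x)\subseteq\mathcal{E}(y)$ and $\mathcal{E}(y)\subseteq\mathcal{E}(x)$, where the second will follow from the first by exploiting the symmetry of $\sim$. The central tool is the equivalence $(a)\Leftrightarrow(b)$ of Theorem \ref{tfaemain}, which says that the relation $\sim$ is preserved by the group action; this is precisely what allows us to transport the defining hypothesis $\gamma x\sim x$ (which mentions only the $x$-orbit) over to the $y$-orbit.

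First I would fix $\gamma\in\mathcal{E}(x)$, so that by definition $\gamma x\sim x$. Since $x\sim y$ by hypothesis, Theorem \ref{tfaemain} applied with $\beta=\gamma$ yields $\gamma x\sim\gamma y$. I would then chain the three relations $\gamma y\sim\gamma x$, $\gamma x\sim x$, and $x\sim y$ using the symmetry and transitivity guaranteed by Lemma \ref{eqrel}, concluding $\gamma y\sim y$. This shows $\gamma\in\mathcal{E}(y)$, and hence $\mathcal{E}(x)\subseteq\mathcal{E}(y)$.

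For the reverse inclusion, I would note that $x\sim y$ implies $y\sim x$ by symmetry (Lemma \ref{eqrel}), so the roles of $x$ and $y$ may be interchanged in the argument just given, yielding $\mathcal{E}(y)\subseteq\mathcal{E}(x)$. Combining the two inclusions gives $\mathcal{E}(x)=\mathcal{E}(y)$, as desired.

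There is no serious obstacle here; the only point requiring care is recognizing that one must invoke the $G$-invariance of $\sim$ (part $(b)$ of Theorem \ref{tfaemain}) to produce the link $\gamma x\sim\gamma y$, rather than attempting to argue directly from the definition of the relation stabilizer, since the statement $\gamma x\sim x$ by itself makes no reference to $y$. Once that observation is in place, the proof reduces to a short exercise in chaining equivalences.
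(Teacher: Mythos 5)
Your proposal is correct and follows essentially the same argument as the paper: fix an element of $\mathcal{E}(x)$, use the equivalence $(a)\Leftrightarrow(b)$ of Theorem \ref{tfaemain} to obtain $\gamma x\sim\gamma y$, chain the relations via Lemma \ref{eqrel} to get $\gamma y\sim y$, and then obtain the reverse inclusion by symmetry. No differences worth noting.
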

\begin{proof}
Let $x,y\in X$. Suppose $x\sim y$. Let $\beta\in\mathcal{E}(x)$. Since $\beta\in\mathcal{E}(x)$, then $\beta x\sim x$. Furthermore, since $\beta\in G$ and $x\sim y$, then by Theorem \ref{tfaemain}, we have that $\beta x\sim\beta y$. Notice that
$$
\beta y\sim \beta x\sim x\sim y,
$$
and so $\beta\in\mathcal{E}(y)$. Hence $\mathcal{E}(x)\subseteq\mathcal{E}(y)$, and we get $\mathcal{E}(y)\subseteq\mathcal{E}(x)$ by symmetry. Thus, $\mathcal{E}(x)=\mathcal{E}(y)$.
\end{proof}

One of the main goals of Section \ref{more} is to look at the conditions needed in order to obtain the converse of Theorem \ref{thmforward}.

%%%%%%%%%%%%%%%%%%%%%%%%%%%%%%%%%%%%%%%%%%%%%%%%%%%%%%%%%%%%%
\section{Properties of the Relation Stabilizers}\label{more}
%%%%%%%%%%%%%%%%%%%%%%%%%%%%%%%%%%%%%%%%%%%%%%%%%%%%%%%%%%%%%

In order to more fully understand the converse of Theorem \ref{thmforward}, we need to first understand some implications for when one relation stabilizer subgroup is contained in another.

\begin{proposition}
Let $x\in X$ and $\alpha\in G$. If $\mathcal{E}(\alpha x)\leq\mathcal{E}(x)$, then for any $\beta\in\mathcal{E}(\alpha x)$, we have that
\begin{enumerate}[(i)]
    \item $\alpha\beta\alpha^{-1}\in\mathcal{E}(\alpha x)$,
    \item $\alpha\beta \alpha^{-1}\beta^{-1}\in\mathcal{E}(\alpha x)$, and
    \item $\beta^{-1}\alpha^{-1}\beta\alpha\in\mathcal{E}(x)$.
\end{enumerate}
\end{proposition}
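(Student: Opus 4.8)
The plan is to treat the three containments in order, relying throughout on two facts: each relation stabilizer is a subgroup of $G$, and the relation $\sim$ is equivariant under the action in the sense of Theorem \ref{tfaemain}, specifically the equivalence $(a)\Leftrightarrow(b)$. The hypothesis $\mathcal{E}(\alpha x)\leq\mathcal{E}(x)$ enters only to upgrade membership: any $\beta\in\mathcal{E}(\alpha x)$ is also in $\mathcal{E}(x)$, so $\beta x\sim x$, and since $\mathcal{E}(x)$ is a subgroup we also get $\beta^{-1}x\sim x$. These two relations, together with repeated use of Theorem \ref{tfaemain}, will carry the whole argument.

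For $(i)$ I would compute the action directly. By associativity of the action,
$$
(\alpha\beta\alpha^{-1})(\alpha x)=(\alpha\beta)x=\alpha(\beta x).
$$
Since $\beta\in\mathcal{E}(\alpha x)\leq\mathcal{E}(x)$ gives $\beta x\sim x$, applying $\alpha$ through Theorem \ref{tfaemain} yields $\alpha(\beta x)\sim\alpha x$, which is precisely $\alpha\beta\alpha^{-1}\in\mathcal{E}(\alpha x)$. Part $(ii)$ then follows purely from the group structure: $(i)$ places $\alpha\beta\alpha^{-1}$ in $\mathcal{E}(\alpha x)$, while $\beta\in\mathcal{E}(\alpha x)$ forces $\beta^{-1}\in\mathcal{E}(\alpha x)$, so the product $(\alpha\beta\alpha^{-1})\beta^{-1}=\alpha\beta\alpha^{-1}\beta^{-1}$ lies in $\mathcal{E}(\alpha x)$ by closure.

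The hard part will be $(iii)$, and I expect the main obstacle there to be a tempting dead end. One is inclined to transport the commutator from $(ii)$, which by hypothesis sits in $\mathcal{E}(x)$, onto the target element by conjugation; but one checks that $\beta^{-1}\alpha^{-1}\beta\alpha=(\alpha\beta)^{-1}(\alpha\beta\alpha^{-1}\beta^{-1})^{-1}(\alpha\beta)$, and since $\alpha\beta$ need not lie in $\mathcal{E}(x)$, subgroup closure yields nothing. Instead I would argue at the level of the action on $x$, peeling off one group element at a time. Writing
$$
(\beta^{-1}\alpha^{-1}\beta\alpha)x=\beta^{-1}\big(\alpha^{-1}(\beta(\alpha x))\big),
$$
the relation $\beta(\alpha x)\sim\alpha x$ (from $\beta\in\mathcal{E}(\alpha x)$) gives $\alpha^{-1}(\beta(\alpha x))\sim x$ after applying $\alpha^{-1}$ via Theorem \ref{tfaemain}, and a further application of $\beta^{-1}$ gives $(\beta^{-1}\alpha^{-1}\beta\alpha)x\sim\beta^{-1}x$. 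The hypothesis then supplies $\beta^{-1}x\sim x$, and transitivity of $\sim$ closes the chain to give $\beta^{-1}\alpha^{-1}\beta\alpha\in\mathcal{E}(x)$. The delicate bookkeeping here is keeping the left-to-right order of the group elements straight as they act on $x$; it is also worth noting that the conjugation step $\alpha^{-1}\beta\alpha\in\mathcal{E}(x)$ is automatic from Theorem \ref{tfaemain}, so the hypothesis is genuinely needed only for the final link $\beta^{-1}x\sim x$.
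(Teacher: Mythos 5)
Your proof is correct and takes essentially the same route as the paper's: part (i) by computing $(\alpha\beta\alpha^{-1})(\alpha x)=\alpha(\beta x)\sim\alpha x$ using the hypothesis and the equivariance of $\sim$ from Theorem \ref{tfaemain}, part (ii) by subgroup closure, and part (iii) by the chain $(\beta^{-1}\alpha^{-1}\beta\alpha)x\sim\beta^{-1}x\sim x$ with transitivity. The only cosmetic difference is that you apply Theorem \ref{tfaemain} one group element at a time in (iii) where the paper applies it once with $\beta^{-1}\alpha^{-1}$, and your side remarks (the conjugation dead end and the observation that the hypothesis enters (iii) only at the final link $\beta^{-1}x\sim x$) are accurate.
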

\begin{proof}
Let $x\in X$ and $\alpha\in G$. Suppose $\mathcal{E}(\alpha x)\leq\mathcal{E}(x)$ and let $\beta\in\mathcal{E}(\alpha x)$. Notice that by supposition, we also get $\beta\in\mathcal{E}(x)$. Observe
$$
(\alpha\beta\alpha^{-1})(\alpha x)=\alpha(\beta x)\sim\alpha x,
$$
and hence $(i)$ is established.

Next, we can easily get $(ii)$ due to $\alpha\beta\alpha^{-1}\in\mathcal{E}(\alpha x)$ by above, and because we know $\beta^{-1}\in\mathcal{E}(\alpha x)$.

Finally, observing that $\beta^{-1}\in\mathcal{E}(x)$, we see that
$$
(\beta^{-1}\alpha^{-1}\beta\alpha)x=(\beta^{-1}\alpha^{-1})\beta(\alpha x)\sim (\beta^{-1}\alpha^{-1})\alpha x=\beta^{-1} x\sim x,
$$
which proves $(iii)$.
\end{proof}

What makes the above so interesting is that it shows both conjugation and commutators are inherited from a subgroup containment. This allows us to realize the form of some elements in the relation stabilizer. Furthermore, as we see below in Lemma \ref{lemeprop}, the action on the group also distributes nicely over the relation stabilizer.

\begin{lemma}\label{lemeprop}
Let $x\in X$ and $\beta\in G$. We have that
\begin{enumerate}[(i)]
    \item $\beta\in\mathcal{E}(x)$ if and only if $\beta\in\mathcal{E}(\beta x)$, and
    \item\label{sublemdist} $\mathcal{E}(\beta x)=\beta \mathcal{E}(x)\beta^{-1}$.
\end{enumerate}
\end{lemma}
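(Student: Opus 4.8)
The plan is to reduce both statements to a single structural fact: the relation $\sim$ is $G$-invariant, which is precisely the equivalence (a)$\Leftrightarrow$(b) of Theorem \ref{tfaemain}. Unpacked, this says that for any $\delta\in G$ we have $u\sim v$ if and only if $\delta u\sim\delta v$. This is the only nontrivial input; everything else is bookkeeping with the definition $\mathcal{E}(w)=\{\gamma\in G\mid \gamma w\sim w\}$, together with applying $\beta$ or its inverse $\beta^{-1}$ to both sides of a relation and using associativity of the action, e.g.\ $\beta^{-1}(\gamma\beta x)=(\beta^{-1}\gamma\beta)x$.

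For part (i), I would first rewrite both membership conditions straight from the definition: $\beta\in\mathcal{E}(x)$ says $\beta x\sim x$, whereas $\beta\in\mathcal{E}(\beta x)$ says $\beta(\beta x)\sim\beta x$. So the claim becomes $\beta x\sim x$ if and only if $\beta^2 x\sim\beta x$. The forward direction is immediate by applying Theorem \ref{tfaemain}(a)$\Rightarrow$(b) with the element $\beta$ to the relation $\beta x\sim x$, which yields $\beta^2 x\sim\beta x$. For the converse I would apply the same implication with $\beta^{-1}$ to $\beta^2 x\sim\beta x$, which collapses to $\beta x\sim x$.

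Part (ii) runs by the analogous translation, now testing an arbitrary $\gamma\in G$ against both sides. Membership $\gamma\in\mathcal{E}(\beta x)$ means $\gamma\beta x\sim\beta x$, while $\gamma\in\beta\mathcal{E}(x)\beta^{-1}$ is equivalent to $\beta^{-1}\gamma\beta\in\mathcal{E}(x)$, i.e.\ $(\beta^{-1}\gamma\beta)x\sim x$. Applying Theorem \ref{tfaemain}(a)$\Leftrightarrow$(b) with $\beta^{-1}$ converts $\gamma\beta x\sim\beta x$ into $\beta^{-1}\gamma\beta x\sim\beta^{-1}\beta x=x$ and back again, so the two membership conditions coincide and the sets are equal by extensionality. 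As a consistency check I note that (ii) in fact subsumes (i): setting $\gamma=\beta$ in $\mathcal{E}(\beta x)=\beta\mathcal{E}(x)\beta^{-1}$ and simplifying $\beta\in\beta\mathcal{E}(x)\beta^{-1}\Leftrightarrow\beta^{-1}\beta\beta=\beta\in\mathcal{E}(x)$ recovers the first statement exactly.

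I do not expect a real obstacle, since the $G$-invariance of $\sim$ does all the work. The one place to stay alert is the bookkeeping of which element ($\beta$ versus $\beta^{-1}$) to apply and on which side, so that the conjugate $\beta^{-1}\gamma\beta$ lands in $\mathcal{E}(x)$ with the correct orientation; reversing the direction would mistakenly produce $\mathcal{E}(\beta x)=\beta^{-1}\mathcal{E}(x)\beta$. The asymmetry in the definition (``$\gamma w\sim w$'' rather than ``$w\sim\gamma w$'') is harmless since $\sim$ is symmetric by Lemma \ref{eqrel}, but I would keep the orientation fixed throughout to avoid sign-of-conjugation errors.
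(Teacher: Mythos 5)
Your proposal is correct and follows essentially the same route as the paper: both parts hinge on the $G$-invariance of $\sim$ from Theorem \ref{tfaemain}(a)$\Leftrightarrow$(b), applied with $\beta$ or $\beta^{-1}$ after unpacking the definition of $\mathcal{E}$. The only cosmetic difference is that you prove (ii) as a single chain of equivalences for an arbitrary $\gamma$, whereas the paper verifies the two set inclusions separately.
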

\begin{proof}
Let $x\in X$ and $\beta\in G$. First, we suppose $\beta\in\mathcal{E}(x)$, and so $\beta x\sim x$. Applying Theorem \ref{tfaemain} yields $\beta(\beta x)\sim\beta x$. Thus $\beta\in\mathcal{E}(\beta x)$.
    
To show the converse of $(i)$, suppose $\beta\in\mathcal{E}(\beta x)$, and so $\beta(\beta x)\sim\beta x$. We again employ Theorem \ref{tfaemain} (with $\beta^{-1}$ this time) to get $\beta x\sim x$. Thus, $\beta\in\mathcal{E}(x)$.

For $(ii)$, we start with $\gamma\in\mathcal{E}(\beta x)$. Since $\gamma\in\mathcal{E}(\beta x)$, then $\gamma\beta x\sim\beta x$. This implies that $\beta^{-1}\gamma\beta x\sim x$, in which case $\beta^{-1}\gamma\beta\in\mathcal{E}(x)$. Observe that
$$
\gamma=e\gamma e=(\beta\beta^{-1})\gamma(\beta\beta^{-1})=\beta(\beta^{-1}\gamma\beta)\beta^{-1}\in\beta\mathcal{E}(x)\beta^{-1}.
$$
Hence, $\mathcal{E}(\beta x)\subseteq\beta\mathcal{E}(x)\beta^{-1}$.

Next, we let $\gamma\in\beta\mathcal{E}(x)\beta^{-1}$. Since $\gamma\in\beta\mathcal{E}(x)\beta^{-1}$, then $\gamma=\beta\delta\beta^{-1}$ for some $\delta\in\mathcal{E}(x)$, in which case $\delta x\sim x$. Notice that
$$
\gamma(\beta x)=\beta\delta\beta^{-1}(\beta x)=\beta\delta(\beta^{-1}\beta)x=\beta\delta e x=\beta\delta x\sim\beta x.
$$
Since $\gamma(\beta x)\sim\beta x$, then $\gamma\in\mathcal{E}(\beta x)$, and so $\beta\mathcal{E}(x)\beta^{-1}\subseteq\mathcal{E}(\beta x)$. The result follows.
\end{proof}

As we shall see below, along with the rest of Section \ref{more}, Lemma \ref{lemeprop}\eqref{sublemdist} in particular has far-reaching consequences.

\begin{proposition}
Let $x\in X$ and $\alpha\in G$. Then $\mathcal{E}(\alpha x)\leq\mathcal{E}(x)$ if and only if $\mathcal{E}(x)\leq\mathcal{E}(\alpha^{-1} x)$.
\end{proposition}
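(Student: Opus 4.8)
The plan is to reduce everything to the conjugation formula from Lemma~\ref{lemeprop}\eqref{sublemdist}, namely $\mathcal{E}(\beta x)=\beta\mathcal{E}(x)\beta^{-1}$, and then use the elementary fact that conjugation by a fixed group element is an automorphism of $G$, so it preserves subgroup containment: for subgroups $A,B\leq G$ and any $g\in G$, we have $A\leq B$ if and only if $gAg^{-1}\leq gBg^{-1}$. First I would apply the lemma twice to translate both conditions in the statement into statements about conjugates of the single subgroup $\mathcal{E}(x)$. Taking $\beta=\alpha$ gives $\mathcal{E}(\alpha x)=\alpha\mathcal{E}(x)\alpha^{-1}$, so the hypothesis $\mathcal{E}(\alpha x)\leq\mathcal{E}(x)$ is equivalent to
$$
\alpha\mathcal{E}(x)\alpha^{-1}\leq\mathcal{E}(x).
$$
Taking $\beta=\alpha^{-1}$ gives $\mathcal{E}(\alpha^{-1}x)=\alpha^{-1}\mathcal{E}(x)\alpha$, so the conclusion $\mathcal{E}(x)\leq\mathcal{E}(\alpha^{-1}x)$ is equivalent to
$$
\mathcal{E}(x)\leq\alpha^{-1}\mathcal{E}(x)\alpha.
$$

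Once both sides are written in this form, the equivalence is just a single conjugation. For the forward direction I would assume $\alpha\mathcal{E}(x)\alpha^{-1}\leq\mathcal{E}(x)$ and conjugate both sides by $\alpha^{-1}$ (on the left) and $\alpha$ (on the right); the left side becomes $\alpha^{-1}\big(\alpha\mathcal{E}(x)\alpha^{-1}\big)\alpha=\mathcal{E}(x)$ and the right side becomes $\alpha^{-1}\mathcal{E}(x)\alpha$, yielding exactly $\mathcal{E}(x)\leq\alpha^{-1}\mathcal{E}(x)\alpha$. For the converse I would assume $\mathcal{E}(x)\leq\alpha^{-1}\mathcal{E}(x)\alpha$ and conjugate by $\alpha$ (on the left) and $\alpha^{-1}$ (on the right) to recover $\alpha\mathcal{E}(x)\alpha^{-1}\leq\mathcal{E}(x)$. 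Translating back through Lemma~\ref{lemeprop}\eqref{sublemdist} then gives the two displayed subgroup inequalities in the original notation.

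The argument is essentially a two-line consequence of the conjugation identity, so there is no serious obstacle; the only thing requiring care is bookkeeping of the conjugation directions, since a sign error in which element is conjugated by $\alpha$ versus $\alpha^{-1}$ would give a containment in the wrong direction. I would therefore state explicitly at the outset which containment-preservation fact I am using and apply it symmetrically, so that each implication is manifestly reversible. It is worth remarking that, because conjugation is a bijection on containments, the proof makes clear that the two hypotheses are not merely logically equivalent but are literally the same inequality viewed before and after conjugating by $\alpha$.
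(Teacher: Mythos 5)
Your proof is correct and follows essentially the same route as the paper's: both arguments rest on Lemma \ref{lemeprop}\eqref{sublemdist} to rewrite $\mathcal{E}(\alpha x)$ and $\mathcal{E}(\alpha^{-1}x)$ as conjugates of $\mathcal{E}(x)$, and then use the fact that conjugation preserves subgroup containment. The paper merely compresses this into a single chain of (in)equalities and leaves the converse as ``similar,'' whereas you spell out both directions explicitly.
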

\begin{proof}
Let $x\in X$ and $\alpha\in G$. Suppose $\mathcal{E}(\alpha x)\leq\mathcal{E}(x)$. Using Lemma \ref{lemeprop}\eqref{sublemdist}, we see that
$$
\mathcal{E}(x)=e\mathcal{E}(x)e=\alpha^{-1}\alpha\mathcal{E}(x)\alpha^{-1}\alpha=\alpha^{-1}\mathcal{E}(\alpha x)\alpha\leq\alpha^{-1}\mathcal{E}(x)\alpha=\mathcal{E}(\alpha ^{-1} x).
$$

The converse follows in a similar manner.
\end{proof}

It turns out that the relation stabilizer may not necessarily be normal. This leads us in the direction of investigating the world in which it will always be normal: the normalizer. As we shall see, this will be the correct lens in which to look that allows us to obtain our goal of getting conditions needed for the converse of Theorem \ref{thmforward}, which we will state in Theorem \ref{tfaecon}. But first, as an analogue to Lemma \ref{lemeprop}, we have the following:

\begin{lemma}\label{Normdiste}
Let $x\in X$ and $\beta\in G$. We have that
\begin{enumerate}[(i)]
    \item $\beta\in N_G(\mathcal{E}(x))$ if and only if $\beta\in N_G(\mathcal{E}(\beta x))$, and
    \item \label{sublemN} $N_G(\mathcal{E}(\beta x))=\beta N_G(\mathcal{E}(x))\beta^{-1}$.
\end{enumerate}
\end{lemma}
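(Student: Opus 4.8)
The plan is to derive both parts from the single structural fact that normalizers transform nicely under conjugation, together with the identity $\mathcal{E}(\beta x)=\beta\mathcal{E}(x)\beta^{-1}$ supplied by Lemma~\ref{lemeprop}\eqref{sublemdist}. I would prove part $(ii)$ first, since part $(i)$ then drops out as a one-line consequence, reversing the order in which the two statements are listed.

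For part $(ii)$, the first step is to establish the general group-theoretic identity $N_G(\beta H\beta^{-1})=\beta N_G(H)\beta^{-1}$ for an arbitrary subgroup $H\leq G$. This I would do by a chain of equivalences: an element $\gamma$ lies in $N_G(\beta H\beta^{-1})$ exactly when $\gamma(\beta H\beta^{-1})\gamma^{-1}=\beta H\beta^{-1}$; conjugating this equation by $\beta^{-1}$ on the left and $\beta$ on the right rewrites it as $(\beta^{-1}\gamma\beta)H(\beta^{-1}\gamma\beta)^{-1}=H$, which says precisely that $\beta^{-1}\gamma\beta\in N_G(H)$, i.e. $\gamma\in\beta N_G(H)\beta^{-1}$. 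Substituting $H=\mathcal{E}(x)$ and invoking Lemma~\ref{lemeprop}\eqref{sublemdist} to replace $\beta\mathcal{E}(x)\beta^{-1}$ by $\mathcal{E}(\beta x)$ then yields $N_G(\mathcal{E}(\beta x))=\beta N_G(\mathcal{E}(x))\beta^{-1}$, which is exactly part $(ii)$.

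With part $(ii)$ in hand, part $(i)$ follows immediately: since $N_G(\mathcal{E}(\beta x))=\beta N_G(\mathcal{E}(x))\beta^{-1}$, membership $\beta\in N_G(\mathcal{E}(\beta x))$ is equivalent to $\beta\in\beta N_G(\mathcal{E}(x))\beta^{-1}$, and because $\beta^{-1}\beta\beta=\beta$, this holds if and only if $\beta\in N_G(\mathcal{E}(x))$. Alternatively, one could mirror the proof of Lemma~\ref{lemeprop}$(i)$ directly, but routing through $(ii)$ avoids any appeal to Theorem~\ref{tfaemain} and keeps the argument purely at the level of conjugation.

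I do not anticipate a genuine obstacle here, as the result is formal. The one place demanding care is the conjugation bookkeeping in the equivalence chain of part $(ii)$: one must consistently use the definition $N_G(H)=\{g\in G\mid gHg^{-1}=H\}$ and track which side each factor of $\beta$ and $\beta^{-1}$ lands on, since a misplaced inverse would silently break the argument. Everything else is routine.
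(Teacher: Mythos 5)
Your proposal is correct, but it is organized quite differently from the paper's own proof, and the difference is worth noting. The paper proves $(i)$ and $(ii)$ separately, each by element-chasing: it repeatedly invokes Lemma~\ref{lemeprop}\eqref{sublemdist} to translate membership in a normalizer into statements of the form $\mathcal{E}(\gamma\beta x)=\mathcal{E}(\beta x)$ (in effect using, ahead of time, the equivalence that later becomes Lemma~\ref{lemnormelement}), and then for $(ii)$ it manipulates an explicit element $\delta=\beta^{-1}\gamma\beta$ through several applications of that identity. You instead isolate the purely group-theoretic fact $N_G(\beta H\beta^{-1})=\beta N_G(H)\beta^{-1}$ for an arbitrary subgroup $H\leq G$, prove it once by a clean chain of equivalences from the definition of the normalizer, and then apply Lemma~\ref{lemeprop}\eqref{sublemdist} exactly once, with $H=\mathcal{E}(x)$, to get $(ii)$; part $(i)$ then falls out in one line since $\beta\in\beta N_G(\mathcal{E}(x))\beta^{-1}$ if and only if $\beta\in N_G(\mathcal{E}(x))$. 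Your route is more modular and arguably more self-contained: it separates the abstract conjugation identity (which has nothing to do with the group action) from the single geometric input, whereas the paper keeps everything phrased in terms of relation stabilizers and the action, which matches its narrative but costs repetition and leans on an equivalence it only formalizes later. Both proofs ultimately rest on the same ingredient, Lemma~\ref{lemeprop}\eqref{sublemdist}; yours simply factors the argument more efficiently.
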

\begin{proof}
Let $x\in X$ and $\beta\in G$. We start by supposing $\beta\in N_G(\mathcal{E}(x))$. Since $\beta\in N_G(\mathcal{E}(x))$, then $\beta\mathcal{E}(x)\beta^{-1}=\mathcal{E}(x)$. Then, using Lemma \ref{lemeprop}\eqref{sublemdist}, we observe that
$$
\mathcal{E}(\beta(\beta x))=\beta\mathcal{E}(\beta x)\beta^{-1}=\beta(\beta\mathcal{E}(x)\beta^{-1})\beta^{-1}=\beta\mathcal{E}(x)\beta^{-1}=\mathcal{E}(\beta x).
$$
Since $\mathcal{E}(\beta(\beta x))=\mathcal{E}(\beta x)$, then $\beta\in N_G(\mathcal{E}(\beta x))$.
    
For the converse, we suppose $\beta \in N_G(\mathcal{E}(\beta x))$. Since $\beta\in N_G(\mathcal{E}(\beta x))$, then $\mathcal{E}(\beta(\beta x))=\mathcal{E}(\beta x)$. We observe that
$$
\mathcal{E}(x)=\mathcal{E}(\beta^{-1}\beta x)=\beta^{-1}\mathcal{E}(\beta x)\beta=\beta^{-1}\mathcal{E}(\beta(\beta x))\beta=\beta^{-1}\beta\mathcal{E}(\beta x)\beta^{-1}\beta=\mathcal{E}(\beta x).
$$
Since $\mathcal{E}(x)=\mathcal{E}(\beta x)$, then $\beta\in N_G(\mathcal{E}(x))$, and so $(i)$ is established.

For $(ii)$, we first suppose that $\gamma\in N_G(\mathcal{E}(\beta x))$, which means that $\mathcal{E}(\gamma\beta x)=\mathcal{E}(\beta x)$. Next, consider $\delta=\beta^{-1}\gamma\beta$. Notice that
$$
\beta\delta\beta^{-1}=\beta(\beta^{-1}\gamma\beta)\beta^{-1}=(\beta\beta^{-1})\gamma(\beta\beta^{-1})=\gamma.
$$
Hence, $\gamma=\beta\delta\beta^{-1}$. We observe that
$$
\mathcal{E}(\delta x)=\mathcal{E}((\beta^{-1}\gamma\beta) x)=\beta^{-1}\mathcal{E}(\gamma\beta x)\beta=\beta^{-1}\mathcal{E}(\beta x)\beta=\mathcal{E}(\beta^{-1}\beta x)=\mathcal{E}(x),
$$
and so $\delta\in N_G(\mathcal{E}(x))$. Since $\gamma=\beta\delta\beta^{-1}$ such that $\mathcal{E}(\delta x)=\mathcal{E}(x)$, we see that $\gamma\in\beta N_G(\mathcal{E}(x))\beta^{-1}$. Thus, we have shown that $N_G(\mathcal{E}(\beta x))\subseteq\beta N_G(\mathcal{E}(x))\beta^{-1}$.
    
Next, suppose $\gamma\in\beta N_G(\mathcal{E}(x))\beta^{-1}$, which means that $\gamma = \beta\delta\beta^{-1}$ for some $\delta\in N_G(\mathcal{E}(x))$. Since $\delta\in N_G(\mathcal{E}(x))$, we have that $\mathcal{E}(\delta x)=\mathcal{E}(x)$. We observe that
$$
\mathcal{E}(\gamma\beta x)=\mathcal{E}((\beta\delta\beta^{-1})\beta x)=\mathcal{E}(\beta\delta x)=\beta\mathcal{E}(\delta x)\beta^{-1}=\beta\mathcal{E}(x)\beta^{-1}=\mathcal{E}(\beta x).
$$
Thus, since $\mathcal{E}(\gamma\beta x)=\mathcal{E}(\beta x)$, it follows that $\gamma\in N_G(\mathcal{E}(\beta x))$, and so $\beta N_G(\mathcal{E}(x))\beta^{-1}\subseteq N_G(\mathcal{E}(\beta x))$, whence the result.
\end{proof}

The transitivity of the group action is a rather powerful but not unreasonable condition. It is on full display in what follows.

\begin{proposition}\label{conjugateGroups}
Let $x,y\in X$. We have that
\begin{enumerate}[(i)]
    \item $\mathcal{E}(x)$ and $\mathcal{E}(y)$ are conjugate subgroups of $G$, and
    \item $N_G(\mathcal{E}(x))$ and $N_G(\mathcal{E}(y))$ are conjugate subgroups of $G$.
\end{enumerate}
\end{proposition}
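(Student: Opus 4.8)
The plan is to reduce both parts to the transitivity of the action together with the two distribution lemmas already established. First I would fix $x,y\in X$ and invoke transitivity to produce an element $\beta\in G$ with $y=\beta x$. This is the only place where transitivity enters, but it is the essential ingredient: it lets me realize the arbitrary pair $(x,y)$ as a single point moved by a single group element, which is exactly what conjugacy requires.

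For part $(i)$, once $y=\beta x$ is in hand, Lemma \ref{lemeprop}\eqref{sublemdist} applies directly and gives
$$
\mathcal{E}(y)=\mathcal{E}(\beta x)=\beta\,\mathcal{E}(x)\,\beta^{-1},
$$
which is precisely the assertion that $\mathcal{E}(x)$ and $\mathcal{E}(y)$ are conjugate subgroups of $G$, with conjugating element $\beta$. For part $(ii)$, I would feed the very same $\beta$ into the analogous Lemma \ref{Normdiste}\eqref{sublemN}, obtaining
$$
N_G(\mathcal{E}(y))=N_G(\mathcal{E}(\beta x))=\beta\,N_G(\mathcal{E}(x))\,\beta^{-1},
$$
so that $N_G(\mathcal{E}(x))$ and $N_G(\mathcal{E}(y))$ are likewise conjugate, and by the same group element.

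I do not expect a genuine obstacle here, since the substantive work has already been carried out in establishing that $\mathcal{E}(\cdot)$ and $N_G(\mathcal{E}(\cdot))$ transform by conjugation under the action. The only points to be careful about are that conjugacy is witnessed by the specific $\beta$ furnished by transitivity, and that this one $\beta$ simultaneously serves both parts. Because the two parts are structurally identical—differing only in whether one applies the conjugation-distribution identity for $\mathcal{E}$ or for $N_G(\mathcal{E})$—I would present them in parallel to make the uniformity of the argument transparent.
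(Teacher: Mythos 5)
Your proposal is correct and matches the paper's own proof essentially verbatim: both use transitivity to obtain a single group element carrying $x$ to $y$, then apply Lemma \ref{lemeprop}\eqref{sublemdist} for part $(i)$ and Lemma \ref{Normdiste}\eqref{sublemN} for part $(ii)$. No further comment is needed.
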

\begin{proof}
Let $x,y\in X$. Since the action on $X$ is transitive, then there exists some $\alpha\in G$ such that $y=\alpha x$. For $(i)$, we use Lemma \ref{lemeprop}\eqref{sublemdist} to observe that
$$
\alpha \mathcal{E}(x)\alpha^{-1}=\mathcal{E}(\alpha x)=\mathcal{E}(y).
$$
Thus, we see that $\mathcal{E}(x)$ and $\mathcal{E}(y)$ are conjugate subgroups of $G$.

Similarly for $(ii)$, we use Lemma \ref{Normdiste}\eqref{sublemN} and find that
$$
\alpha N_G(\mathcal{E}(x))\alpha^{-1}=N_G(\mathcal{E}(\alpha x))=N_G(\mathcal{E}(y)).
$$
Hence, we conclude that $N_G(\mathcal{E}(x))$ and $N_G(\mathcal{E}(y))$ are also conjugate subgroups of $G$.
\end{proof}

\begin{corollary}
Let $x,y\in X$. We have that
\begin{enumerate}[(i)]
    \item $\mathcal{E}(x)\cong\mathcal{E}(y)$, and
    \item $N_G(\mathcal{E}(x))\cong N_G(\mathcal{E}(y))$.
\end{enumerate}
\end{corollary}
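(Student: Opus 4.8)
The plan is to invoke the standard group-theoretic fact that conjugate subgroups are isomorphic, together with Proposition \ref{conjugateGroups}, which has already established that both pairs of subgroups in question are conjugate. Concretely, whenever $K=\alpha H\alpha^{-1}$ for subgroups $H,K\leq G$ and some fixed $\alpha\in G$, the conjugation map $\psi_\alpha\colon H\to K$ given by $\psi_\alpha(h)=\alpha h\alpha^{-1}$ is a group isomorphism. I would isolate this as the engine of the proof and then apply it twice.

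First I would verify the three defining properties of an isomorphism for $\psi_\alpha$. It is a homomorphism since $\psi_\alpha(h_1h_2)=\alpha h_1h_2\alpha^{-1}=(\alpha h_1\alpha^{-1})(\alpha h_2\alpha^{-1})=\psi_\alpha(h_1)\psi_\alpha(h_2)$, inserting $\alpha^{-1}\alpha=e$ between $h_1$ and $h_2$. It is injective because $\alpha h_1\alpha^{-1}=\alpha h_2\alpha^{-1}$ forces $h_1=h_2$ upon left-multiplying by $\alpha^{-1}$ and right-multiplying by $\alpha$. It is surjective onto $K=\alpha H\alpha^{-1}$ by the very definition of that set. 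Hence $\psi_\alpha$ is an isomorphism and $H\cong K$.

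With this in hand, part $(i)$ follows immediately: Proposition \ref{conjugateGroups}$(i)$ guarantees an element $\alpha\in G$ (in fact any $\alpha$ with $\alpha x=y$, which exists by transitivity) such that $\mathcal{E}(y)=\alpha\mathcal{E}(x)\alpha^{-1}$, so $\psi_\alpha$ restricts to an isomorphism $\mathcal{E}(x)\cong\mathcal{E}(y)$. Part $(ii)$ is entirely analogous, using Proposition \ref{conjugateGroups}$(ii)$ to supply the conjugating element for the normalizers, yielding $N_G(\mathcal{E}(x))\cong N_G(\mathcal{E}(y))$.

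I do not anticipate a genuine obstacle here, as the statement is a direct corollary of the already-proven conjugacy and a textbook fact. The only point meriting (minimal) care is to observe that well-definedness of $\psi_\alpha$ as a map \emph{into} the target subgroup is precisely the conjugacy relation furnished by Proposition \ref{conjugateGroups}; once that is noted, the remaining verification is routine and purely formal.
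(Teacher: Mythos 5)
Your proposal is correct and follows exactly the paper's own route: cite Proposition \ref{conjugateGroups} for conjugacy of both pairs of subgroups, then invoke the standard fact that conjugate subgroups are isomorphic. The only difference is that you spell out the verification that the conjugation map $\psi_\alpha$ is an isomorphism, which the paper treats as known.
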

\begin{proof}
Let $x,y\in X$. By Proposition \ref{conjugateGroups}, we know that $\mathcal{E}(x)$ and $\mathcal{E}(y)$ are conjugate and $N_G(\mathcal{E}(x))$ and $N_G(\mathcal{E}(y))$ are conjugate. Since conjugate subgroups are isomorphic, our results follow.
\end{proof}

Having the relation stabilizers all be isomorphic, along with their corresponding normalizers, is powerful. But we need to once again look at the more rigid requirement of equality and how this can tie together with the normalizer. Lemma \ref{lemnormelement} addresses that question.

\begin{lemma}\label{lemnormelement}
Let $x\in X$ and $\alpha\in G$. Then $\mathcal{E}(\alpha x)=\mathcal{E}(x)$ if and only if $\alpha\in N_G(\mathcal{E}(x))$.
\end{lemma}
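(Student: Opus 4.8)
The plan is to reduce the statement directly to Lemma \ref{lemeprop}\eqref{sublemdist} together with the defining property of the normalizer. Recall that for any subgroup $H\leq G$, an element $\alpha\in G$ lies in $N_G(H)$ precisely when $\alpha H\alpha^{-1}=H$. Thus the entire content of the lemma is to recognize that the condition $\mathcal{E}(\alpha x)=\mathcal{E}(x)$ can be rephrased as a conjugation identity involving $\mathcal{E}(x)$.

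First I would invoke Lemma \ref{lemeprop}\eqref{sublemdist}, which tells us that $\mathcal{E}(\alpha x)=\alpha\mathcal{E}(x)\alpha^{-1}$ for every $x\in X$ and every $\alpha\in G$. Substituting this in, the hypothesized equality $\mathcal{E}(\alpha x)=\mathcal{E}(x)$ becomes $\alpha\mathcal{E}(x)\alpha^{-1}=\mathcal{E}(x)$. Since this is exactly the condition defining membership in $N_G(\mathcal{E}(x))$, both directions of the biconditional follow simultaneously: if $\mathcal{E}(\alpha x)=\mathcal{E}(x)$, then $\alpha\mathcal{E}(x)\alpha^{-1}=\mathcal{E}(x)$ and so $\alpha\in N_G(\mathcal{E}(x))$; conversely, if $\alpha\in N_G(\mathcal{E}(x))$, then $\alpha\mathcal{E}(x)\alpha^{-1}=\mathcal{E}(x)$, and Lemma \ref{lemeprop}\eqref{sublemdist} rewrites the left-hand side as $\mathcal{E}(\alpha x)$, yielding the desired equality.

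Because the argument is a one-line substitution once Lemma \ref{lemeprop}\eqref{sublemdist} is in hand, there is no genuine obstacle here; the only point requiring care is to confirm that we use the normalizer in its standard sense, namely the set of $\alpha$ for which $\alpha\mathcal{E}(x)\alpha^{-1}=\mathcal{E}(x)$ rather than merely $\alpha\mathcal{E}(x)\alpha^{-1}\subseteq\mathcal{E}(x)$. Since conjugation by $\alpha$ is a bijection of $G$, these two formulations coincide, so no subtlety about containment versus equality arises. The main value of the lemma is therefore conceptual: it identifies $N_G(\mathcal{E}(x))$ as precisely the set of group elements that fix the relation stabilizer under the correspondence $x\mapsto\alpha x$, which is the key bridge toward characterizing when $\mathcal{E}(\alpha x)=\mathcal{E}(x)$ in Theorem \ref{tfaecon}.
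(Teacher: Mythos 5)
Your proof is correct and follows essentially the same route as the paper: both directions come immediately from Lemma \ref{lemeprop}\eqref{sublemdist} rewriting $\mathcal{E}(\alpha x)$ as $\alpha\mathcal{E}(x)\alpha^{-1}$, at which point the statement is just the definition of $N_G(\mathcal{E}(x))$. One caveat on your side remark: the claim that $\alpha\mathcal{E}(x)\alpha^{-1}\subseteq\mathcal{E}(x)$ forces equality because conjugation is a bijection is false for general infinite groups, but this is harmless here since your argument (like the paper's) only ever uses the equality formulation of the normalizer.
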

\begin{proof}
Let $x\in X$ and $\alpha\in G$. For the forward direction, we suppose $\mathcal{E}(\alpha x)=\mathcal{E}(x)$. Using Lemma \ref{lemeprop}\eqref{sublemdist}, we see that $\mathcal{E}(x)=\mathcal{E}(\alpha x)=\alpha\mathcal{E}(x)\alpha^{-1}$. Since $\mathcal{E}(x)=\alpha\mathcal{E}(x)\alpha^{-1}$, then $\alpha\in N_G(\mathcal{E}(x))$.

For the converse, we suppose $\alpha\in N_G(\mathcal{E}(x))$. Since $\alpha\in N_G(\mathcal{E}(x))$, then we have that $\mathcal{E}(x)=\alpha\mathcal{E}(x)\alpha^{-1}$. Again using Lemma \ref{lemeprop}\eqref{sublemdist}, we have $\mathcal{E}(\alpha x)=\alpha\mathcal{E}(x)\alpha^{-1}=\mathcal{E}(x)$.
\end{proof}

We recall that a subgroup is said to be self-normalizing if it is equal to its own normalizer. Finally, we can state Theorem \ref{tfaecon} below, housing the converse of Theorem \ref{thmforward}.

\begin{theorem}\label{tfaecon}
Let $x,y\in X$. The following are equivalent:
\begin{enumerate}[(a)]
    \item If $\mathcal{E}(x)=\mathcal{E}(y)$, then $x\sim y$,
    \item $\mathcal{E}(z)$ is self-normalizing for all $z\in X$, and
    \item $\mathcal{E}(x)$ is self-normalizing.
\end{enumerate}
\end{theorem}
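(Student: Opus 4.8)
\emph{The plan} is to prove the three equivalences by a short cycle, reading condition (a) as the assertion that for the fixed $x$ and \emph{every} comparison point, equality of relation stabilizers forces equivalence; the symbol $y$ in the statement is thus a stand-in for an arbitrary second point. The engine of the whole argument is Lemma~\ref{lemnormelement}, which converts the set-level equation $\mathcal{E}(\alpha x)=\mathcal{E}(x)$ into the membership statement $\alpha\in N_G(\mathcal{E}(x))$. Once that translation is in hand, the self-normalizing hypothesis is precisely what collapses $N_G(\mathcal{E}(x))$ back onto $\mathcal{E}(x)$, making ``$\mathcal{E}(\alpha x)=\mathcal{E}(x)$'' and ``$\alpha x\sim x$'' coincide.

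First I would prove $(a)\Rightarrow(c)$. Since $\mathcal{E}(x)\subseteq N_G(\mathcal{E}(x))$ always holds, it suffices to establish the reverse inclusion. Take $\alpha\in N_G(\mathcal{E}(x))$; by Lemma~\ref{lemnormelement} this gives $\mathcal{E}(\alpha x)=\mathcal{E}(x)$. Applying $(a)$ to the comparison point $\alpha x$ yields $x\sim\alpha x$, hence $\alpha x\sim x$ and $\alpha\in\mathcal{E}(x)$. Therefore $N_G(\mathcal{E}(x))\subseteq\mathcal{E}(x)$, so $\mathcal{E}(x)$ is self-normalizing, which is $(c)$.

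Next I would establish $(c)\Rightarrow(b)$ using the conjugacy machinery of Section~\ref{more}. Given any $z\in X$, transitivity provides $\alpha\in G$ with $z=\alpha x$, and Lemmas~\ref{lemeprop}\eqref{sublemdist} and~\ref{Normdiste}\eqref{sublemN} give
$$
N_G(\mathcal{E}(z))=\alpha N_G(\mathcal{E}(x))\alpha^{-1}=\alpha\mathcal{E}(x)\alpha^{-1}=\mathcal{E}(\alpha x)=\mathcal{E}(z),
$$
where the middle equality is exactly $(c)$. Thus every $\mathcal{E}(z)$ is self-normalizing, giving $(b)$. Finally, for $(b)\Rightarrow(a)$ I would specialize $(b)$ at $z=x$ to learn that $\mathcal{E}(x)$ is self-normalizing, and then for any $y$ with $\mathcal{E}(x)=\mathcal{E}(y)$ write $y=\alpha x$; Lemma~\ref{lemeprop}\eqref{sublemdist} gives $\alpha\mathcal{E}(x)\alpha^{-1}=\mathcal{E}(y)=\mathcal{E}(x)$, so $\alpha\in N_G(\mathcal{E}(x))=\mathcal{E}(x)$, whence $y=\alpha x\sim x$. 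This closes the cycle $(a)\Rightarrow(c)\Rightarrow(b)\Rightarrow(a)$.

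I expect the only delicate point to be the bookkeeping in $(a)\Rightarrow(c)$: condition $(a)$ must be invoked at the variable point $\alpha x$ rather than at a single fixed $y$, so it is essential to read $(a)$ as ranging over all comparison points for $x$. Beyond that, the theorem is a direct assembly of Lemma~\ref{lemnormelement} with the conjugation formulas of Lemmas~\ref{lemeprop} and~\ref{Normdiste}; no new estimate or construction is required, and the substance of the result is the recognition that self-normalizing is exactly the algebraic condition realizing the converse of Theorem~\ref{thmforward}.
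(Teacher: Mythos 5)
Your proposal is correct, but it runs the cycle in the opposite direction from the paper and distributes the work differently. The paper proves $(a)\Rightarrow(b)$ directly: given $\beta\in N_G(\mathcal{E}(z))$ for an arbitrary $z$, Lemma~\ref{lemnormelement} gives $\mathcal{E}(\beta z)=\mathcal{E}(z)$, and then hypothesis $(a)$ is invoked at the pair $(\beta z, z)$ to conclude $\beta z\sim z$; after that, $(b)\Rightarrow(c)$ is a trivial specialization and $(c)\Rightarrow(a)$ is the same transitivity argument you give. Note that the paper's $(a)\Rightarrow(b)$ step tacitly reads $(a)$ as quantified over \emph{both} points, since it applies the hypothesis at arbitrary pairs $(\beta z,z)$; your reading fixes $x$ and lets only the comparison point vary, which is a weaker hypothesis, so your cycle $(a)\Rightarrow(c)\Rightarrow(b)\Rightarrow(a)$ actually establishes a formally sharper statement and resolves the quantifier ambiguity you flag at the end. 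The price is that your $(c)\Rightarrow(b)$ step cannot be free: to propagate self-normalization from the single point $x$ to every $z\in X$ you must import the conjugation formula $N_G(\mathcal{E}(\beta x))=\beta N_G(\mathcal{E}(x))\beta^{-1}$ of Lemma~\ref{Normdiste}\eqref{sublemN} together with Lemma~\ref{lemeprop}\eqref{sublemdist}, machinery the paper's proof of this theorem never touches (it needs only Lemma~\ref{lemnormelement} and transitivity). In short: the paper's route is leaner, yours is more careful about what $(a)$ quantifies over and showcases why the normalizers' conjugacy structure matters; both are valid.
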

\begin{proof}
Let $x,y\in X$. For $(a)\Rightarrow(b)$, we suppose $\mathcal{E}(x)=\mathcal{E}(y)$ implies $x\sim y$. Next, we let $z\in X$ be arbitrary. In order to show that $\mathcal{E}(z)$ is self-normalizing, it is sufficient to prove $N_G(\mathcal{E}(z))\leq\mathcal{E}(z)$. Let $\beta\in N_G(\mathcal{E}(z))$, and so $\mathcal{E}(\beta z)=\mathcal{E}(z)$ by Lemma \ref{lemnormelement}. Since $\mathcal{E}(\beta z)=\mathcal{E}(z)$, then by supposition we have that $\beta z\sim z$, in which case $\beta\in\mathcal{E}(z)$. Hence, $\mathcal{E}(z)$ is self-normalizing.

Next, for $(b)\Rightarrow(c)$, we suppose $\mathcal{E}(z)$ is self-normalizing for all $z\in X$. Taking $z=x$ gives our desired result.

Finally, for $(c)\Rightarrow(a)$, we suppose $\mathcal{E}(x)$ is self-normalizing. To prove $(a)$, we further suppose $\mathcal{E}(x)=\mathcal{E}(y)$. Since the action on $X$ is transitive, then there exists some $\alpha\in G$ such that $y=\alpha x$. By Lemma \ref{lemnormelement}, we have that $\alpha\in N_G(\mathcal{E}(x))$. Since $\mathcal{E}(x)$ is self-normalizing, then $\mathcal{E}(x)=N_G(\mathcal{E}(x))$. Therefore, $\alpha\in N_G(\mathcal{E}(x))=\mathcal{E}(x)$ and so $y=\alpha x\sim x$.
\end{proof}

Notice how Theorem \ref{tfaecon} gives us both a necessary and sufficient condition for what is needed to obtain the converse of Theorem \ref{thmforward}: one relation stabilizer has to be self-normalizing. In fact, if one is, then they all are.

%%%%%%%%%%%%%%%%%%%%%%%%%%%%%%%%%%%%%%%%%%%%%%%%%%%%%%%%%%%%%
\section{Future Work}
%%%%%%%%%%%%%%%%%%%%%%%%%%%%%%%%%%%%%%%%%%%%%%%%%%%%%%%%%%%%%

Due to Lemma \ref{lemmafirst}\eqref{lem1c}, it is clear that $0\leq|K_x(y)|\leq c$ for all $x,y\in X$. We know the consequences for when $|K_x(y)|$ attains the poles. In particular, whenever $|K_x(y)|=c$, we recall that Theorem \ref{tfaemain} implies $x\sim y$, among others. Indeed, we conjecture that requiring $|K_x(y)|>0$ is sufficient to obtain $x\sim y$. Moreover, whenever $|K_x(y)|=0$, recall that Lemma \ref{lemperp} implies $K_x\perp K_y$. As another open question, we suspect that $K_x\perp K_y$ if and only if $\mathcal{E}(x)\cap\mathcal{E}(y)=\{e\}$, which would mimic the goal of Section \ref{more} and provide information on how the kernels relate to the relation stabilizer subgroups for this specific scenario.

Furthermore, note that the results of Proposition \ref{thmortho} and Corollary \ref{corortho} both require a finite set of kernels acting as an orthogonal basis for $H$. One of our unsolved questions here is if such an orthogonal basis can always be found. Explicitly, if $H$ is a finite-dimensional $G$-invariant space and $x\in X$, then does there exist a finite set of kernels $\{K_{x_i}\}_{i=1}^n$ that forms an orthogonal basis for $H$ such that $x_1=x$?

Finally, we turn towards the idea of two reproducing kernel Hilbert spaces $H_1$ and $H_2$ and a linear map $T$ between them. It is natural to wonder what properties are preserved by $T$. Specifically, for $x,y\in X$ and their respective kernels $K_x$ and $K_y$ in $H_1$, does $(TK_x)(x)=(TK_y)(y)$ in $H_2$? If $(TK_x)(x)=0$, does this imply that $T=0$? Does $T$ map kernels to kernels? In other words, for any kernel $K_x$ in $H_1$, is $TK_x$ a kernel in $H_2$? We plan to investigate such matters in a follow-up paper.

%%%%%%%%%%%%%%%%%%%%%%%%%%%%%%%%%%%%%%%%%%%%%%%%%%%%%%%%%%%%%

\end{document}